\journal{Journal}
\renewenvironment{flushright}{%
    \begin{center}%
}{%
    \end{center}%
    \ignorespacesafterend%
}
\newcolumntype{L}{>{$}l<{$}} 
\pgfplotsset{compat=newest}
\pgfplotsset{%
layers/standard/.define layer set={%
    background,axis background,axis grid,axis ticks,axis lines,axis tick labels,pre main,main,axis descriptions,axis foreground%
}{grid style= {/pgfplots/on layer=axis grid},%
    tick style= {/pgfplots/on layer=axis ticks},%
    axis line style= {/pgfplots/on layer=axis lines},%
    label style= {/pgfplots/on layer=axis descriptions},%
    legend style= {/pgfplots/on layer=axis descriptions},%
    title style= {/pgfplots/on layer=axis descriptions},%
    colorbar style= {/pgfplots/on layer=axis descriptions},%
    ticklabel style= {/pgfplots/on layer=axis tick labels},%
    axis background@ style={/pgfplots/on layer=axis background},%
    3d box foreground style={/pgfplots/on layer=axis foreground},%
    },
}
\pgfplotsset{ /tikz/every picture/.append style={trim axis left, trim axis right}}
\tikzset{external/system call={pdflatex \tikzexternalcheckshellescape 
                                        -halt-on-error
                                        -interaction=batchmode 
                                        -jobname "\image" "\texsource"
                                        && pdftops -eps "\image.pdf"}}
\newcommand{\bs}[1]{\boldsymbol{{#1}}}
\newcommand{\vect}[1]{\boldsymbol{\mathbf{#1}}}
\definecolor{dkred}{rgb}{0.0,0,0}
\definecolor{dkgreen}{rgb}{0,0,0}
\definecolor{corrige}{rgb}{0,0,0}
\definecolor{corrige2}{rgb}{0,0,0}
\definecolor{corrige3}{rgb}{0,0,0.0}
\definecolor{color_MKres}{rgb}{0,0.0,0.0}
\definecolor{corrigeplane}{rgb}{0,0,0}
\definecolor{vclast}{rgb}{0,0.0,0}
\definecolor{mkcor}{rgb}{0.0,0.0,0}
\definecolor{corrcol}{rgb}{0.0,0.0,0.0}
\newcommand{\corr}[1]{{\color{corrcol}{#1}}}
\newcommand\deflabel[1]{\def\@currentlabel{#1}}
\newcommand{\pushright}[1]{\ifmeasuring@#1\else\omit\hfill$\displaystyle#1$\fi\ignorespaces}
\DeclareMathOperator*{\argmin}{arg\!\min}
\DeclareMathOperator*{\D}{d\!}
\newcommand{\bigcomp}{%
  \DOTSB
  \mathop{\vphantom{\sum}\mathpalette\bigcomp@\relax}%
  \slimits@
}
\newcommand{\bigcomp@}[2]{%
  \begingroup\m@th
  \sbox\z@{$#1\sum$}%
  \setlength{\unitlength}{0.9\dimexpr\ht\z@+\dp\z@}%
  \vcenter{\hbox{%
    \begin{picture}(1,1)
    \bigcomp@linethickness{#1}
    \put(0.5,0.5){\circle{1}}
    \end{picture}%
  }}%
  \endgroup
}
\newcommand{\bigcomp@linethickness}[1]{%
  \linethickness{%
      \ifx#1\displaystyle 2\fontdimen8\textfont\else
      \ifx#1\textstyle 1.65\fontdimen8\textfont\else
      \ifx#1\scriptstyle 1.65\fontdimen8\scriptfont\else
      1.65\fontdimen8\scriptscriptfont\fi\fi\fi 3
  }%
}
\newcommand{\new}[1]{{\color{black}#1}}
\newcommand{\inputnamedtex}[1]{%
    \tikzsetnextfilename{#1}%
	    \includegraphics{tikz/#1.eps}
}
\begin{document}

\title{Towards Optimal Spatio-Temporal Decomposition of Control-Related Sum-of-Squares Programs
}

\author[1,2]{Vít Cibulka}

\author[1,2]{Milan Korda}

\author[1]{Tomáš Haniš}

\authormark{CIBULKA \textsc{et al.}}
\titlemark{Towards Optimal Spatio-Temporal Decomposition of Control-Related Sum-of-Squares Programs
}


\address[1]{\orgdiv{Department of Control Engineering}, \orgname{Faculty of Electrical Engineering, Czech Technical University in Prague}, \country{The Czech Republic}}

\address[2]{\orgdiv{CNRS}, \orgname{Laboratory for Analysis and Architecture of Systems}, \orgaddress{\state{Toulouse}, \country{France}}}

\corres{Corresponding author Vít Cibulka,  \email{cibulka.vitek@gmail.com}}


\fundingInfo{Czech Science Foundation (GACR)
under contracts No. GA19-18424S, GA20-11626Y, 
and by the Grant Agency of the Czech Technical University in Prague,
grant No. SGS19/174/OHK3/3T/13. This work was also co-funded by the European Union under the project ROBOPROX (reg.~no.~CZ.02.01.01/00/22\_008/0004590) as well as was part of the programme DesCartes supported by the National Research Foundation, Prime Minister's Office, Singapore under its Campus for Research Excellence and Technological Enterprise (CREATE) programme.}


\abstract[Abstract]{
This paper presents a method for calculating the Region of Attraction (ROA) of nonlinear dynamical systems, both with and without control. The ROA is determined by solving a hierarchy of semidefinite programs (SDPs) defined on a splitting of the time and state space. Previous works demonstrated that this splitting could significantly enhance approximation accuracy, although the improvement was highly dependent on the ad-hoc selection of split locations. In this work, we eliminate the need for this ad-hoc selection by introducing an optimization-based method that performs the splits through conic differentiation of the underlying semidefinite programming problem. We provide the differentiability conditions for the split ROA problem, prove the absence of a duality gap, and demonstrate the effectiveness of our method through numerical examples.

}

\keywords{Region of attraction, Conic differentiation, Polynomial control systems,
Polynomial optimization, Sum-of-squares optimization}


\maketitle

\renewcommand\thefootnote{}
\footnotetext{\textbf{Abbreviations:} ROA, region of attraction; SDP, semi-definite program; LP, linear program; SOS, sum of squares.}

\renewcommand\thefootnote{\fnsymbol{footnote}}
\setcounter{footnote}{1}

\section{Introduction}

This work addresses stability and reachability analysis on controlled nonlinear dynamical systems. The method used in this paper assesses these properties by calculating the Region of Attraction (ROA) of a given target set. The reachable set is obtained by reversing the time.

The majority of the currently used methods deal with autonomous systems, where the approximation of ROA is obtained from level sets of Lyapunov functions \new{(see, e.g., \cite{chesi2011domain})}. In the case of polynomial systems, one can obtain the Lyapunov functions by solving a series of semidefinite programs (SDPs), similar to our method. However, these approaches do not consider control systems and are limited to the equilibria of the considered systems. 

This paper uses the works from \cite{milanroa} and \cite{Cibulka2022}, which provide converging outer approximations of the ROA for controlled polynomial systems with respect to a given target set, not necessarily an equilibrium. The original version of this method, introduced in \cite{milanroa}, calculates the ROA by transforming the problem into an infinite-dimensional Linear Program (LP) on measures, whose dual is then relaxed into a problem on sum-of-squares polynomials of a given degree which can be equivalently written as an SDP.

The unsolved challenge of that approach was the rapid growth rate of the size of the SDP with respect to the degree of the used polynomials.
The work \cite{Cibulka2022} provided a remedy for this issue in the form of spatio-temporal decomposition of the problem and its variables. The new problem was obtained by splitting the time and state space into several smaller subsets resulting in multiple smaller, interconnected SDPs of lower complexity. 
The work provided significant improvement in terms of time and memory demands, but left one question unanswered: How to split the time and state space?

This work is a step toward answering this question by proposing an optimization-based method for finding the split configuration. We use the recent work on conic differentiation \cite{diff_sdp2} to obtain a gradient of the related sum-of-squares problem with respect to the parametrization of the splits (e.g., the split positions). The parameters are then optimized by a first-order method. We provide the 
conditions for differentiability of the split problem and show the effectiveness of the method on the examples used in \cite{Cibulka2022}.

\new{This work can be seen as complementary to sparsity-based approaches for complexity reduction of the SDP-based methods~\cite{wang2023exploiting,tacchi2020approximating,schlosser2020sparse} and could be combined with them. The same philosophy of splitting the state space can be applied to other problem classes amenable to these methods such as the invariant set~\cite{korda2014convex} or invariant measure computation~\cite{korda2021convex}, extreme values~\cite{fantuzzi2020bounding} or partial differential equations~\cite{korda2022moments}.}

\textbf{Structure of the paper}
 Section \ref{sec:problem_statement}  defines the Region of Attraction (ROA) and motivates this work. Section \ref{sec:splitting} presents the split ROA problem and the conditions for strong duality. The differentiation is described in Section \ref{sec:sdp_diff} along with the proof of differentiability. The numerical results are in Section \ref{sec:numerical_examples} and we conclude in Section \ref{sec:conclusion}.

\textbf{Notation}
The set of consecutive integers from 1 to \(n\) is denoted by \(\mathbb{Z}_{n}\). The Lebesgue measure (the volume) of a set \(A\) is denoted by \(\lambda(A)\).

\section{Problem Statement}
\label{sec:problem_statement}
We shall focus on the problem of calculating the region of attraction (ROA) of a controlled nonlinear dynamical system
\begin{equation}
	\label{eq:nlsys}
	\dot{x}(t) = f(t,x(t),u(t)),\;\; t \in [0,T],
\end{equation}
where \(x(t) \in \mathbb{R}^n\) is the state vector, \(u(t) \in \mathbb{R}^m\) is the control input vector, \(t\) is time, \(T > 0\) is the final time 
and \(f\) is the vector field, which is assumed to be polynomial in variables \(x\) and \(u\). The state and control inputs are constrained to lie in \mbox{basic semialgebraic sets}
\begin{equation}
	\label{eq:semialg_sets}
\begin{alignedat}{2}
 	&u(t) \in U   :=&&\{u \in \mathbb{R}^m : g_j^U(u)       \geq 0, j \in \mathbb{Z}_{n_U}\}, t\in [0,T] ,\\
 	&x(t) \in X   :=&&\{x \in \mathbb{R}^n : g_j^X(x)       \geq 0, j \in \mathbb{Z}_{n_X}\}, t\in [0,T] ,\\
 	&x(T) \in X_T :=&&\{x \in \mathbb{R}^n : g_j^{ X_T }(x) \geq 0, j \in \mathbb{Z}_{N_{X_T}}\},
\end{alignedat}
\end{equation}
where \(g_j^U(u)\),  \(g_j^X(x) \), and \(g_j^{X_T}(x)\) are polynomials. The Region of Attraction (ROA) is then defined as 
\begin{equation}
	\label{eq:ROA_def}
\begin{alignedat}{2}
	X_0 = \{& x_0 \in &&X : \exists\, u(\cdot) \in L([0,T];U) \\
	&\text{s.t.}&& \dot{x} = f(t,x(t),u(t)) \text{ a.e. on}\;  [0,T],\\
	& &&x(0) = x_0,\;x(t) \in X\;\forall\, t\in [0,T],\; x(T) \in X_T  \},
\end{alignedat}
\end{equation}
where ``a.e.'' stands for ``almost everywhere'' with respect to the Lebesgue measure and $L([0,T];U)$ denotes the space of measurable functions from $[0,T]$ to $U$.

The work \cite{milanroa} introduced an algorithm for calculating an outer approximation of the ROA, based on relaxing a polynomial optimization problem 
into a semidefinite program based on the sum-of-squares (SOS) techniques. In \cite{Cibulka2022}, we improved the accuracy of the SOS relaxation by discretizing (or splitting) the time and state space thus allowing for tighter approximations with favorable scaling properties. It was, however, unclear how to perform this splitting to obtain optimal results, which is the focus of this paper.

We build upon the algorithm from \cite{Cibulka2022}, which increased the accuracy by splitting the sets \(X\) and \([0,T]\) into multiple subsets,
and consider the positions of splits as parameters that are then optimized using gradient descent. The following example shows the advantage of splitting the problem and then optimizing the splits.

\subsection*{Motivation example}
Let us motivate the idea of optimizing the split positions.
Consider a simple double integrator system
\begin{align}
\begin{split}
	\label{eq:double_integrator_motivation}
 \dot{x}_1 &= x_2 \\
\dot{x}_2 &= u
\end{split}
\end{align} 
with \(X = [-0.7,0.7] \times [-1.2,1.2]\) and \(U = [-1,1]\).
 Figure \ref{fig:motiv_roa} shows the difference between the original approach from \cite{milanroa} (salmon), the improved version from \cite{Cibulka2022} (purple), and the version provided in this paper (blue). The real ROA is depicted in green. We can see that each version provides a significant improvement in accuracy. The most notable detail about the ROA calculated by the proposed algorithm (blue) is that it has exactly the same memory demands as the ROA calculated via the method \cite{Cibulka2022} (purple), which provides substantially worse estimation.
\begin{figure}[!htb]
	\centering
		\includegraphics[width = 0.5\textwidth]{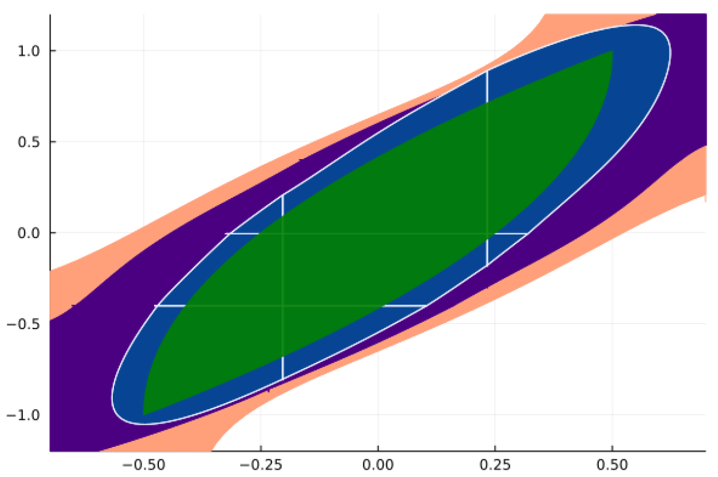}
\caption{ROA approximations of the double integrator.
The salmon approximation \new{was obtained with degree 4 polynomials} and no splits \cite{milanroa}, the purple approximation with 
\new{degree 4 polynomials} and 4 equidistantly-placed splits \cite{Cibulka2022}, and the blue \new{approximation with degree 4 polynomials and 4 splits} with optimized positions. The green set is the ground truth.
The memory demands for blue and purple are exactly the same.
}
\label{fig:motiv_roa}
\end{figure}


\section{Discretizing time and state space}
\label{sec:splitting}
Although this work is mainly concerned with the SDP formulation of the ROA problem, we will first introduce the polynomial problem in order to give meaning to the parameters, which will be optimized through the SDP formulation.

We discretize the time axis as 
\begin{equation}
	[0,T] = \bigcup\limits_{k=1}^{K-1} [T_k,T_{k+1}], \text{ where } T_1 = 0, T_K = T
\end{equation}
and the state space as
\begin{equation}
	X = \bigcup\limits_{i=1}^{I} X_{i},
\end{equation}
where  \(K \geq 2\) is the number of time splits \corr{(including the boundaries)} and \(I\) is the number 
of closed subsets \(X_i\).
In this work, we assume that the subsets \(X_i\) are created by splitting the state space into axis-aligned boxes. Therefore, the \new{interiors of the} sets \(X_i\) do not intersect but \new{the sets} can share a zero-volume boundary. \corr{Therefore the boundary between two neighboring cells $X_a$ and $X_b$ is $X_a \cap X_b$. }
The time-splits are the scalars \(T_k\), excluding the fixed boundary terms; we denote this set as 
\begin{equation}
	\theta_{T} = \{ \corr{T_{2}}, \dots, T_{K-1} \}.
\end{equation}
We define the set of all neighboring subsets \(X_i\) as
\begin{equation}
	N_X := \{(a,b): X_a \cap X_b \not= \emptyset\}.
\end{equation}
The set of state-space parameters is denoted as \(\theta_{X}\) and depends on the chosen parametrization of the splits. In this work, the state space is divided by hyperplanes which are formed by splitting the axes into intervals; this restricts the \(X_i\)'s to axis-aligned boxes as mentioned earlier. The locations of the splits are the parameters in \(\theta_{X}\). An illustration is provided in the Figure \ref{fig:ex_splits}. 
\begin{figure}[!htb]
	\centering
		\includegraphics{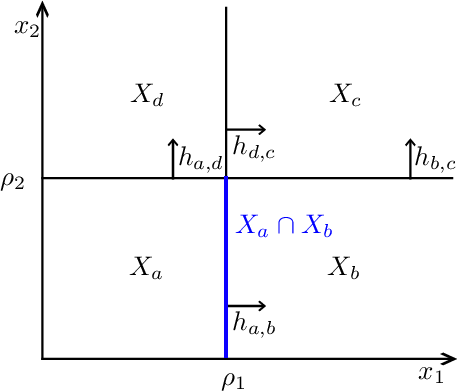}
\caption{Illustration of the split sets \(X_i\), the set boundaries \(x_{a,b}\), their normal vectors \(h_{a,b}\),
and the parameter set \(\theta_X\), for which we get \(\theta_X = 
\{\rho_1,\rho_2\}\) in this example. 
}
\label{fig:ex_splits}
\end{figure}

We shall now describe the steps required to turn the problem of finding the ROA into a semidefinite program (SDP). We will describe the polynomial formulation, then relax it via SOS, and then reformulate it as an SDP.

\subsection{Polynomial formulation}
\label{sec:poly_formulation}
The outer approximation of the ROA \(X_0\)
can be obtained as a super-level set \(	\bar{X}_{0} := \{x : v(0,x) \geq 0\}\) of a piece-wise polynomial
\begin{equation}
	\label{eq:split_v}
	v(t,x) = 
	\begin{cases}
		v_{1,1}(t_1,x_1) &\text{for } t \in [T_1,T_{2}], x \in X_1\\
		\dots\\
		v_{i,k}(t_k,x_i) &\text{for } t \in [T_k,T_{k+1}], x \in X_i\\
		\dots\\
		v_{I,K-1}(t_{K-1},x_I) &\text{for } t \in [T_{K-1},T_{K}], x \in X_I
	\end{cases}
\end{equation}
which is a solution to the problem
\begin{equation}
	\label{eq:dualOriginalSPlit}
	\begin{split}
	\begin{alignedat}{3}
		& d_\mathrm{s}^\star = &&\inf \sum_i \int_{X_i} w_i(x)d\lambda(x)     \\
& \text{s.t.\,}&& \text{for all } i \in \mathbb{Z}_{I}, k \in \mathbb{Z}_{K-1} \text{ and }\\
& && \hspace{3cm} (a,b) \in N_X, && \   x_{a,b}\in X_a\cap X_b\\
		&&& ( \mathcal{L} v_{i,k} )(t,x,u) \leq 0  \quad\,  \forall(t,x,u)  \in [&&T_k,T_{k+1}] \times X_i \times U    \\
	&	&&w_i(x)                \geq v_{i,1}(0,x) + 1  \quad &&\forall x \in X_i   \\
&		&&v_{i,K-1}(T,x)              \geq 0   \quad &&\forall x \in X_T \\
&    &&w_i(x)                \geq 0         \quad &&\forall x \in X_i \\
&	&& v_{i,k}(T_{k+1},x) \geq v_{i,k+1}(T_{k+1},x) \quad &&\forall x \in X_i\\
&	&&( v_{a,k}(t,x_{a,b}) - v_{b,k}(t,x_{a,b}) )\cdot  h_{a,b}^\top&&f(t,x_{a,b},u)  \geq 0,
	\end{alignedat}\\
\end{split}
\end{equation}
where the polynomials \(w_i\) and \(v_{i,k}\) are the problem variables and
 \(h_{a,b}\) is the normal vector of the boundary $X_a\cap X_b$,
 \corr{which is assumed to be element-wise positive.}  
For more insight, we refer the reader to \cite{Cibulka2022}.

The problem is parametrized by the time splits \(\theta_T\) and the state splits \(\theta_X\) (which define 
the boundaries \(X_a\cap X_b\)).
Let us define the vector of the parameters as 
\begin{equation}
	\theta = \theta_T \cup \theta_X.
\end{equation}

\subsection{Sum-of-squares relaxation}
\label{sec:sos_relax}
The SOS approximation  of \eqref{eq:dualOriginalSPlit} reads
\begin{equation}
	\label{eq:dualSOS}
	\begin{alignedat}{3}
		& &&  \inf \sum_i \text{ }  \mathbf{w}_i^\top l_i\\
&\text{s.t.\quad} &&\text{for all } i \in \mathbb{Z}_{I}, k \in \mathbb{Z}_{K-1} \text{ and }  (a,b) \in N_X \\
		& &&-(\mathcal{L}v_{i,k})(z) =  q_{i,k}(z) + \bs{s^\tau_{i,k}}(z)^\top\bs{g^{\tau}_k}(t)\\
		& &&\hspace{2.2cm} + \bs{s^X_{i,k}}(z)^\top \bs{g^X_{i,k}}(x) + \bs{s^U_{i,k}}(z)^\top \bs{g^U}(u) &&\\
		& &&w_i(x) - v_{i,k}(0,x) - 1 = q_{0_{i,k}}(x) + \bs{s^0_{i,k}}(x)^\top \bs{g_{i}^X}(x)\\
		& &&v_{i,K}(T,x) = q^T_i(x) + \bs{s^{X_T}_{i}}(x)^\top \bs{g^{X_T}_{i}}(x)\\
		& &&w_i(x) = q^w_i(x) + \bs{s^w_{i}}(x)^\top \bs{g^X_{i}}(x) \\
		& &&v_{i,k}(T_{k+1},x) - v_{i,k+1}(T_{k+1},x) = \\
		& &&\hspace{3.35cm}q_{{i,k}}^\tau(x) + \bs{s^t_{i,k}}(x)^\top \bs{g^X_{i}}(x) \\
		& && ( v_{a,k}(t,x_{a,b}) - v_{b,k}(t,x_{a,b}) ) = \\
		& &&\hspace{0.59cm} q^{1}_{k,a,b}(z) + \sum\nolimits_{j=1}^{n_X} s^{1}_{j,k,a,b}(z) h_{a,b}^\top f_j(t,x_{a,b},u)&& \\
		& &&\hspace{0.59cm} + s^{\rm a_1}_{k,a,b}(z)g^{\rm a_1}_{k,a,b}(z)\\ 
		& && ( v_{b,k}(t,x_{a,b}) - v_{a,k}(t,x_{a,b})  ) = \\
		& &&\hspace{0.59cm} q^{2}_{k,a,b}(z) - \sum\nolimits_{j=1}^{n_X} s^{2}_{j,k,a,b}(z) h_{a,b}^\top f_j(t,x_{a,b},u)&& \\ 
		& &&\hspace{0.59cm} + s^{\rm a_2}_{k,a,b}(z)g^{\rm a_2}_{k,a,b}(z),\\ 
	\end{alignedat}
\end{equation}
where \(z = [t,x,u]^\top\), \(w_i(x)\) and \(v_{i,k}(t,x)\) are polynomials, \(\mathbf{w}_i\) is a vector of coefficients of \(w_i(x)\) and \(l_i\) is a vector of Lebesgue measure moments indexed with respect to the same basis as the coefficients of $w_i$.  The decision variables in the problem are the polynomials $v_{i,k}$ and $w_i$ as well as sum-of-squares multipliers $q$, $s$ and $\bs s$.  The symbols $\boldsymbol{g_{i}^X}$, $\boldsymbol{g_{i}^{X_T}}$, $\boldsymbol{g_{i}^U}$ and $\boldsymbol{g_{k}^\tau}$ denote the column vectors of polynomials describing the sets $X_i$, $X_T\cap X_i$,  $U$ and $[T_k,T_{k+1}]$ in that order.
The degrees of all polynomial decision variables are chosen such that the degrees of all polynomials appearing in (\ref{eq:dualSOS}) do not exceed a given relaxation order $d$. This is a design parameter controlling the accuracy of the approximation.

The outer approximation of the ROA of degree \(d\) is defined by the piece-wise polynomial \(v^d\) as
\begin{equation}
	X_{d} = \{x \mid v^d(0,x) \ge 0\}.	
\end{equation} 
\corr{
	It is worth mentioning that if some splits coincide and create a ``degenerate'' subset \(X_{\rm d}\), the subset will simply lose dimensions; in the most extreme case it will become a point, 
	which is still a valid semi-algebraic set and no special treatment is necessary. 
}




\subsection{Strong Duality}
\label{sec:duality_proof}
We shall now show that the splitting does not break the strong duality property of the original, non-split ROA problem. The infinite-dimensional setting is addressed later in Theorem \ref{th:infinite_gap}. Here we focus on the strong duality of the relaxed problem \corr{\eqref{eq:dualSOS}}, for which we use the general proof from \cite{Tacchi2021}. To use the results from \cite{Tacchi2021}, we need to show that our problem can be written
as the augmented version of the generalized moment problem (GMP)
\begin{equation}
	\label{eq:GMP}
	\begin{alignedat}{2}
		p^\star_{\rm GMP}& = \sup \int \vect{c} \D{\mathbb{\vect{\mu}}} \\ 
		\text{s.t.}&  \quad \int \vect{\Phi}_{\alpha} \D{\vect{\mu}} = a_\alpha 	\quad	\alpha \in \mathbb{A} \\
		&\quad \int \vect{\Psi}_{\beta} \D{\vect{\mu}} \leq b_\beta 		\quad \beta \in \mathbb{B} \\ 
		&\quad \vect{\mu} \in \corr{\mathcal{M}(\mathbf{K}_1)_+ \times \dots \times \mathcal{M}(\mathbf{K}_N)_+,}
	\end{alignedat}
\end{equation}
where \corr{\(\mathcal{M}(\mathbf{K}_i)_+\)} is a set of positive measures supported on a set \corr{\(\mathbf{K}_i\)}, \(\mathbb{A}\) and \(\mathbb{B}\) are index sets, \(a_\alpha\) and \(b_\beta\) are scalars, \(\vect{\Phi}_\alpha = \begin{bmatrix} \Phi_{\alpha_1} & \dots & \Phi_{\alpha_{N}}
\end{bmatrix}^\top\) is a vector of polynomials and similarly for \(\vect{\Psi}_\beta\) \new{and \(\vect{c}\)}.
The vector notation is to be understood as 

\begin{equation}
	\int \vect{\Phi}_\alpha \D{{\vect\mu}} = \sum_{p=1}^N \int \Phi_{\alpha_p} \D{\mu_p}
\end{equation}

The primal of \eqref{eq:dualOriginalSPlit} reads
\begin{equation}
	\label{eq:primalSplit}
	\begin{alignedat}{2}
		&p^\star_s = &&\sup \sum_{i} \int_{X_i} 1 \D{\mu_{T_1}^i} \\ 
		&\text{s.t.  } &&\mu^i_{T_1} + \hat{\mu}^i_0 = \lambda \\
		& &&\begin{aligned}
			-\mathcal{L}' \mu^i_k 
			+&(\mu^i_{T_{k+1}} \otimes \delta_{t=T_{k+1}}) 
			- (\mu^i_{T_k} \otimes \delta_{t=T_k}) \\ 
		+&\sum_{b \in N_{X_i}^{\rm out}} (h_{i,b}^\top f)' \mu^{i \cap b}_k
		-\sum_{a \in N_{X_i}^{\rm in}} (h_{a,i}^\top f)' \mu^{a \cap i}_k
		 = 0 \\
		 & \mspace{184.5mu}   \forall(t,x,u)  \in [T_k,T_{k+1}] \times X_i \times U
		\end{aligned}\\
		&&&\begin{aligned}
		&\mu^i_k \in \mathcal{M}([T_k,T_{k+1}]\times X_i \times U)_+ &&\forall (i,k) \in \mathbb{Z}_I \times \mathbb{Z}_{K-1}\\
		 &\mu^i_{T_k} \in \mathcal{M}(X_i)_+ &&\forall (i,k) \in \mathbb{Z}_I \times \mathbb{Z}_{K-2} \\ 
		 &\mu^i_{T_{K-1}} \in \mathcal{M}(X_T)_+  &&\forall i\in \mathbb{Z}_I \\
		 &\hat{\mu}^i_0 \in \mathcal{M}(X_i)_+ &&\forall i\in \mathbb{Z}_I 
		\end{aligned}\\
		&&& \mu^{a\cap b}_k \in \mathcal{M}([T_k,T_{k+1}]\times \corr{(X_a \cap X_b)} \times U)_+ \\
		&&&\quad\quad \forall (k,(a,b)) \in \mathbb{Z}_{K-1} \times N_X,
	\end{alignedat}
\end{equation}
\corr{with decision variables \(\mu_{T_k}^i, \hat{\mu}_0^i,\mu_k^i,\text{ and }\mu_k^{a\cap b}\)}. The normal vector \(h_{a,b}\) \corr{of the boundary  \(X_a \cap X_b\) is element-wise positive.}
The set \(N_{X_i}^{\rm in}\) contains indices of neighbours of \(X_i\), such that the normal of their common boundary \(h_{\cdotp,i}\) points to \(X_i\). Similarly for \(N_{X_i}^{\rm out}\) in the opposite direction. For our specific case of splits along the axes and element-wise positive normal vectors, we can write 
\begin{equation}
	\begin{split}
		N_{X_i}^{\rm in} &= \{ 
	a \in \mathbb{Z}_I : X_i \cap X_a \neq 0,
	h_{a,i}^\top (x_i - x_a) \geq 0	
	\}\\
	N_{X_i}^{\rm out} &= \{ 
	b \in \mathbb{Z}_I : X_i \cap X_b \neq 0,
	h_{i,b}^\top (x_i - x_b) \leq 0	
	\}.
	\end{split}
\end{equation}
\begin{figure}[!htb]
	\centering
		\includegraphics{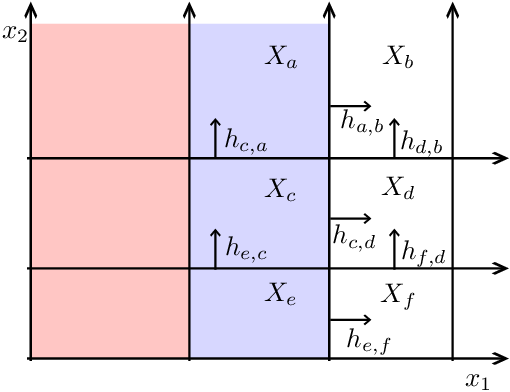}
\caption{Example of split state-space. For the set \(X_d\) it holds
\(N_{X_d}^{\rm in} = \{c,f\}\), \(N_{X_d}^{\rm out} = \{b\}\).
The blue area is the set \mbox{\(X_{\rm blue} = \bigcup_{i \in \mathcal{E}(h_{a,b},a)} X_i\)}
and both the red and blue areas together are the set  \mbox{\(
	X_{\rm red} \cup X_{\rm blue} = \bigcup_{i \in \mathcal{P}(h_{a,b},a)} X_i\)}.
	The set
	\(\mathcal{E}_{\rm n}(h_{a,b},a) \) is equal to 
	\( \{(a,b),(c,d),(e,f)\}\).
	Note that \(\mathcal{P}(h_{a,b},a) = \mathcal{P}(h_{c,d},c) = \mathcal{P}(h_{e,f},e)\), similarly for \(\mathcal{E}(\cdot,\cdot)\) and \(\mathcal{E}_{\rm n}(\cdot,\cdot)\).
	}
\label{fig:ex_normals}
\end{figure}

Considering polynomial test functions \(\Psi_1 = \Psi_1(x)\) and \(\Psi_2 = \Psi_2(t,x)\),
we can write the equality constraints of \eqref{eq:primalSplit}
as 
\begin{equation}
	\int \Psi_1 \D{\mu^i_{T_1}} + \int \Psi_1 \D{\hat{\mu}^i_{0}} = \int \Psi_1 \D{\lambda} 
\end{equation}
and 
\begin{equation}
	\begin{split}
		\int \mathcal{L} &\Psi_2 \D{\mu^i_k} + \int \Psi_2(T_{k+1},\cdotp) \D{ \mu^i_{T_{k+1}}} 
		- \int \Psi_2(T_k,\cdotp) \D{\mu^i_{T_k}} \\
		+ &\sum_{b \in N_{X_i}^{\rm out}} \int h_{i,b}^\top f \Psi_2 \D{\mu^{i\cap b}_k }
		- \sum_{a \in N_{X_i}^{\rm in}} \int h_{a,i}^\top f \Psi_2 \D{\mu^{a\cap i}_k } = 0.	
	\end{split}
\end{equation}
Since \(\int \Psi_1 \D{\lambda}\) is a constant and \(f\) is assumed to be polynomial,
we can see that \eqref{eq:primalSplit} fits the general description \eqref{eq:GMP}.

To use the results from \cite{Tacchi2021} for showing 
strong duality of the relaxation \eqref{eq:dualSOS}, we must satisfy the following
two assumptions

\begin{assumption}
	\label{ass:balls}
The description of the sets \corr{\(\mathbf{K}_i\)} in \eqref{eq:GMP}
contains the ball constraint 
\mbox{\(g_{\text{B}_i}(x) = R_{\rm B}^2 - ||x||^2\)},
such that \(\corr{\mathbf{K}_i} \subset \{x : g_{\text{B}_i}(x) \geq 0\}\).
\end{assumption}
Although the problem \eqref{eq:dualOriginalSPlit}
 does not contain the ball constraints in 
the presented form, they can be easily added without changing
the optimal value to ensure the strong duality of the problem.

Note that if the subset \(K_i\) is a hypercube, 
it is usually described by \(g_{K_i,p}(x) = R_{p}^2 - ||x_{p}||^2 \) for all \( p=1\dots n\);
in this case, the redundant ball constraint is not needed.

\begin{assumption}
	\label{ass:splitflow}
All \(h_{a,b}^\top f\) are nonzero on 
the boundary \mbox{\(X_a \cap X_b \)} for all \((a,b)\in N_X\) \corr{and \(u\in U\)}. 

\end{assumption}


\begin{lemma}\label{lem:boundedMass}
    If Assumption~\ref{ass:splitflow} is satisfied, the 
	set \(X\) is connected, and 
	\(h_{a,b}^\top f\) is nonzero on the splits,
	 then the feasible set of (\ref{eq:primalSplit}) is bounded, i.e., there exists a constant $C > 0$ such that $\int 1\,\D\mu < C$ for all measures $\mu$ appearing in~(\ref{eq:primalSplit}).
\end{lemma}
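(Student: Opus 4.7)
The strategy is to bound the four families of decision measures in (\ref{eq:primalSplit}) --- $\hat\mu^i_0$, $\mu^i_{T_k}$, $\mu^i_k$, and $\mu^{a\cap b}_k$ --- one at a time by testing the Liouville-type equation against judiciously chosen polynomial test functions $\Psi_2(t,x)$. The essential bookkeeping observation is that each boundary measure $\mu^{a\cap b}_k$ enters the cell equations for cells $a$ and $b$ with opposite signs, so all split-flux contributions $\int h_{a,b}^\top f\,\Psi_2\,\D\mu^{a\cap b}_k$ cancel pairwise whenever the cell equations are summed over $i\in\mathbb{Z}_I$.

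Three bounds follow directly. The marginal constraint $\mu^i_{T_1}+\hat\mu^i_0=\lambda|_{X_i}$ immediately yields $\hat\mu^i_0(X_i),\,\mu^i_{T_1}(X_i)\leq\lambda(X_i)\leq\lambda(X)$. Testing the transport equation with $\Psi_2\equiv 1$ and summing over $i$ produces the mass-conservation identity $\sum_i\mu^i_{T_{k+1}}(X_i)=\sum_i\mu^i_{T_k}(X_i)$; induction on $k$ from the base $\sum_i\mu^i_{T_1}(X_i)\leq\lambda(X)$ then bounds every time marginal by $\lambda(X)$. Finally, testing with $\Psi_2(t,x)=t$, so that $\mathcal{L}\Psi_2\equiv 1$, and again summing over $i$ gives $\sum_i\int 1\,\D\mu^i_k=(T_{k+1}-T_k)\sum_i\mu^i_{T_k}(X_i)\leq T\lambda(X)$, bounding the occupation measures.

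The main effort is then the split measures $\mu^{a\cap b}_k$. By Assumption~\ref{ass:splitflow} together with continuity of $h_{a,b}^\top f$ on the compact set $[T_k,T_{k+1}]\times(X_a\cap X_b)\times U$, there exists $\varepsilon_{a,b,k}>0$ with $|h_{a,b}^\top f|\geq\varepsilon_{a,b,k}$, and moreover $h_{a,b}^\top f$ has constant sign on the support of $\mu^{a\cap b}_k$. Consequently $\int 1\,\D\mu^{a\cap b}_k\leq\varepsilon_{a,b,k}^{-1}|\phi_{a,b,k}|$ with $\phi_{a,b,k}:=\int h_{a,b}^\top f\,\D\mu^{a\cap b}_k$, reducing the task to bounding the individual fluxes $|\phi_{a,b,k}|$. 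The unsummed cell equation tested against $\Psi_2\equiv 1$ already yields the net-flux identity $\sum_b\phi_{i,b,k}-\sum_a\phi_{a,i,k}=\mu^i_{T_k}(X_i)-\mu^i_{T_{k+1}}(X_i)$, whose right-hand side is bounded by the previous step.

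The hardest part will be closing the gap between this net-flux identity and bounds on the individual fluxes: cycles in the flux-oriented cell graph (which arise, for instance, for rotational vector fields) can a priori support unbounded circulating components without violating cell mass balance. To control these I would combine the occupation-measure bound $\sum_i\int 1\,\D\mu^i_k\leq T\lambda(X)$ with boundedness of $f$ on the compact set $X\times U$, which caps the number of times any trajectory can cross a given boundary in $[T_k,T_{k+1}]$; connectedness of $X$ ensures that this accounting propagates consistently across all cells. Transcribing this trajectory-level count into the measure setting --- most naturally through test functions monotone in the split coordinate on each cell, so that $\mathcal{L}\Psi_2$ has definite sign on cell interiors while $\Psi_2$ separates the split direction on each boundary --- should then deliver the uniform constant $C$ demanded by the statement.
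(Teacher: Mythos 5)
Your handling of $\hat\mu^i_0$, $\mu^i_{T_k}$ and $\mu^i_k$ coincides with the paper's argument (constant and linear-in-$t$ test functions, summation over $i$ to cancel the boundary fluxes, induction over $k$), and your reduction of the boundary measures to their fluxes $\phi_{a,b,k}$ via Assumption~\ref{ass:splitflow} plus compactness is also the right first move. The gap is precisely where you announce ``the hardest part'': you never close it, and the device you gesture at would not close it. The feasible set of \eqref{eq:primalSplit} consists of abstract nonnegative measures satisfying linear constraints, not superpositions of genuine trajectories, so a ``cap on the number of times any trajectory can cross a given boundary'' has no direct counterpart here --- establishing such a bound purely from the linear constraints is exactly what the lemma must do, and invoking trajectories is circular at the level of the relaxation. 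Likewise, a test function monotone in the split coordinate does not in general make $\mathcal{L}\Psi_2$ of definite sign on cell interiors without extra assumptions on $f$, so that route stalls as well.

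The paper closes the gap with a combinatorial summation that exploits the axis-aligned structure of the splits. To bound $\mu^{a\cap b}_s$, one sums the cell equations not over a single cell (your net-flux identity) and not over all cells (which kills every flux), but over the index set $\mathcal{P}(h_{a,b},a)$ of all cells lying on one side of the hyperplane containing $X_a\cap X_b$. Every boundary interior to this half-collection appears twice with opposite signs and cancels --- this is what eliminates your worry about circulating components, because for axis-aligned boxes the only un-cancelled internal boundary of $\bigcup_{i\in\mathcal{P}(h_{a,b},a)}X_i$ is the full splitting hyperplane itself. Testing with $\Psi_2\equiv 1$ then leaves, as in \eqref{eq:proof_flow_measure2}, the single term $\int h_{a,b}^\top f\,\D{\mu_s^{\mathcal{E}}}$ with $\mu_s^{\mathcal{E}}$ the sum of all boundary measures carried by that hyperplane, equal to an already-bounded combination of time marginals. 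Connectedness of $X$ makes the hyperplane slice connected, so $h_{a,b}^\top f$ keeps a constant sign on it and the flux controls the total mass of $\mu_s^{\mathcal{E}}$; since the summands in \eqref{eq:compound_measure} are nonnegative, each individual $\mu_s^{i\cap j}$, in particular $\mu_s^{a\cap b}$, is bounded. This half-space summation is the one idea missing from your proposal; everything before it is essentially the paper's proof.
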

\begin{proof}
     To prove that all masses are bounded, we first sum the equations of~(\ref{eq:primalSplit}) over time and space, which eliminates the additional
boundary measures introduced by the time and space splits respectively. This will give us a similar situation as in the original work \cite{milanroa}
and simplify the proof for the boundary measures. \corr{We shall denote \(\int 1 \D{\mu}\) as \(\text{mass}(\mu)\).} 

The first constraint \(\mu^i_{T_1} + \hat{\mu}^i_0 = \lambda\) implies that \(p^\star_s\) is bounded and \(\mu^i_{T_1}\) and \(\hat{\mu}^i_{0}\) are bounded.

By summing the second constraint over all \(i\) and \(k\) we obtain
\begin{equation}	
	\label{eq:proof_sum_i_k}
  \sum_i \mu^i_{T_{K-1}} \otimes \delta_{t=T_{K-1}}   = \sum_{i,k} \mathcal{L}' \mu^i_k + \sum_i \mu^i_{T_1} \otimes \delta_{t=0}
\end{equation}
A test function \(\Psi(t,x) = 1\) then gives us
\begin{equation}
	 \sum_i \text{mass}(\mu^i_{T_{K-1}}) = \sum \text{mass}(\mu^i_{T_1}),
\end{equation}
meaning that \(\mu^i_{T_{K-1}}\) are bounded.

With a test function \(\Psi(t,x) = t\) we obtain 
\begin{equation}
	T \sum_i \text{mass}(\mu^i_{T_{K-1}}) = \sum_{i,k} \text{mass}(\mu^i_k)
\end{equation}
which shows that \(\mu^i_k\) are bounded.

Let us sum the second constraint over all \(i\) for some time \(k = s\)
\begin{equation}
	\begin{split}
		-\sum_i \mathcal{L}' \mu^i_s 
			+\sum_i(\mu^i_{T_{s+1}} \otimes \delta_{t=T_{s+1}}) 
			- \sum_i(\mu^i_{T_s} \otimes \delta_{t=T_s}) =0,
	\end{split}
\end{equation}
with a test function \(\Psi(t,x) = 1\) we obtain
\begin{equation}
	\label{eq:sum_ks}
			\sum_i\text{mass}(\mu^i_{T_s}) = 
			\sum_i \text{mass}(\mu^i_{T_{s+1}}) .
\end{equation}
Since \(\mu_{T_1}^i\) are bounded, \(\mu_{T_2}^i\) are bounded as well; by induction all \(\mu_{T_s}^i\) are bounded.

To show the boundedness of the remaining measures  \(\mu_k^{a \cap b}\) let us first introduce the following sets of indices: 
\begin{equation}
	\mathcal{P}(h,i) := \{p \in \mathbb{Z}_I | \forall x_i \in X_i \quad
	\exists x_p \in X_p: h^{\top}x_p \leq h^\top x_i\},
\end{equation}
\begin{equation}
	\label{eq:set_Eps}
	\mathcal{E}(h,i) := \{p \in \mathbb{Z}_I | \forall x_i \in X_i \quad
	\exists x_p \in X_p: h^{\top}x_p = h^\top x_i\}.
\end{equation}

By summing the second constraint over the indices \(\mathcal{P}(h, i)\), we will be left only with the boundary measures \(\mu^{\cdot \cap : }_k\) corresponding to the normal vectors in the direction of \(h\), their indices are characterized by the set \(\mathcal{E}(h, i)\). 
For convenience, we also define the set of tuples
\begin{equation}
	\label{eq:set_Eps_n}
	\mathcal{E}_{\rm n}(h,i) := \{(i,j) \in \mathbb{Z}_I^2 | \forall i \in \mathcal{E}(h,i) \quad
	\exists X_j : h_{i,j} = h\},
\end{equation}
which contains the set indices \(\mathcal{E}(h,i)\) and indices of their connecting neighbours in the direction \(h\). See Fig. \ref{fig:ex_normals} for graphical representation.

Let us fix time as \(k=s\) and focus only on space. We shall show the boundedness of a measure \(\mu_s^{a \cap b}\) which corresponds to the normal vector \(h_{a,b}\). Let us sum the equations corresponding to time \(s\) and the space indices \(\mathcal{P}(h_{a,b},a)\), with a test function \(\Psi(t,x) = 1\) we obtain 
\begin{equation}
	\label{eq:proof_flow_measure1}
	\begin{split} 
		0 + \sum_{i \in \mathcal{P}(h_{a,b},a)}
		&\Bigl( \text{mass}(\mu_{T_{s+1}^i}) - \text{mass}(\mu_{T_{s}^i}) \Bigr) \\
	&+ \sum_{(i,j) \in \mathcal{E}_{\rm n}(h_{a,b},a)} \int h_{i, j}^\top f \D{\mu^{i \cap j}_s}
	= 0,
	\end{split} 
\end{equation}
where all the normal vectors have the same direction, i.e. \(h_{i, j} = h_{a,b}\).
The normal vectors with directions different from \(h_{a,b}\) have been summed out since they all appear in \(\mathcal{P}(h_{a,b}, a)\) exactly twice with opposite signs (once as incoming and once as outgoing vector).
 We can rewrite the equation \eqref{eq:proof_flow_measure1} as 
\begin{equation}
	\label{eq:proof_flow_measure2}
	\begin{split} 
		0 + \sum_{i \in \mathcal{P}(h_{a,b},a)}
		&\Bigl( \text{mass}(\mu_{T_{s+1}^i}) - \text{mass}(\mu_{T_{s}^i}) \Bigr) \\
	&+  \int h_{a,b}^\top f \D{\mu_{s}^{\mathcal{E}}}
	= 0,
	\end{split} 
\end{equation}
where
\begin{equation}
	\label{eq:compound_measure}
	\mu_s^{\mathcal{E}} = \sum_{(i,j) \in \mathcal{E}_{\rm n}(h_{a,b},a)} \mu_s^{i \cap j}.
\end{equation}
Since the top part of \eqref{eq:proof_flow_measure2} is bounded, the integral \(\int h_{a,b}^\top f \D{\mu_{s}^{\mathcal{E}}}\) is also bounded.
The measure \(\mu_s^{\mathcal{E}}\) is supported on a intersection of \(X\) and a hyperplane with normal vector \(h_{a,b}\).
Using the assumptions on connected \(X\) and that \(h_{a,b}^\top f\) is nonzero on the support of \(\mu_s^{\mathcal{E}}\), we can conclude that \(\mu_{s}^{\mathcal{E}}\) is bounded.

This implies boundedness of all the measures \( \mu_s^{i \cap j}\) from \eqref{eq:compound_measure}, since they all have the same sign. Due to the construction of the set \(\mathcal{E}(h_{a,b},a)\), the measure \(\mu^{a\cap b}_s\) is trivially one of the measures \(\mu^{i \cap j}_s\) and is therefore bounded.

This procedure can be done for all the measures \(\mu^{a \cap b}_{k}\).

We can now conclude that under the assumption of 
\begin{equation}
(h_{a,b}^{\top}f)(t,x,u) \neq 0 \quad \text{on } [0,T] \times X_{a,b} \times U \quad \forall (a,b) \in N_X,
\end{equation}
the feasible set of \eqref{eq:primalSplit} is bounded.
\end{proof}
\begin{theorem}
	\label{th:infinite_gap}
    If Assumption~\ref{ass:splitflow} \new{holds} then there is no duality gap between \eqref{eq:dualOriginalSPlit} and \eqref{eq:primalSplit}, i.e., $d_s^\star = p_s^\star$.
\end{theorem}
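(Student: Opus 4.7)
The plan is to invoke the general strong-duality result for the augmented generalized moment problem from \cite{Tacchi2021}, using Lemma~\ref{lem:boundedMass} to certify the crucial boundedness hypothesis. The skeleton of the argument is already sketched in the paragraphs preceding the theorem: one shows that \eqref{eq:primalSplit} is an instance of \eqref{eq:GMP}, verifies the two hypotheses (Assumptions~\ref{ass:balls} and~\ref{ass:splitflow}), and then reads off strong duality. My job in the proof is to stitch these three pieces together cleanly.

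First I would make the GMP reformulation precise. The tuple of decision measures $\mu = (\mu_{T_k}^i, \hat\mu_0^i, \mu_k^i, \mu_k^{a\cap b})$ lives in a product of spaces of the form $\mathcal{M}(\mathbf{K}_\ell)_+$, where each $\mathbf{K}_\ell$ is either $X_i$, $X_T\cap X_i$, $[T_k,T_{k+1}]\times X_i\times U$, or $[T_k,T_{k+1}]\times(X_a\cap X_b)\times U$. The equality constraints of \eqref{eq:primalSplit}, tested against monomials $\Psi_1(x)$ and $\Psi_2(t,x)$, translate (using $\mathcal{L}\Psi_2$ and the divergence-theorem-type identities already written in the excerpt) into countably many linear moment equalities $\int \boldsymbol{\Phi}_\alpha\,\D\mu = a_\alpha$. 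The objective $\sum_i\int_{X_i} 1\,\D\mu_{T_1}^i$ is linear in $\mu$, fitting $\int \boldsymbol{c}\,\D\mu$. This places \eqref{eq:primalSplit} verbatim in the form \eqref{eq:GMP}.

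Next I would check the two assumptions required by \cite{Tacchi2021}. For Assumption~\ref{ass:balls}, each $X_i$ is an axis-aligned box and hence already carries coordinate-wise bound constraints whose sum implies a ball constraint; the remark following Assumption~\ref{ass:balls} notes that a redundant ball constraint can otherwise be appended to every $\mathbf{K}_\ell$ without changing the optimal value. For Assumption~\ref{ass:splitflow}, it is part of the hypothesis of the theorem. With both assumptions in hand, Lemma~\ref{lem:boundedMass} applies and gives a uniform constant $C>0$ bounding the mass of every measure variable on the feasible set; in other words, the primal feasible set is contained in a weak-$\star$-compact subset of the product of Banach dual spaces.

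Finally I would invoke the abstract duality theorem of \cite{Tacchi2021}: for an augmented GMP satisfying the ball (Archimedean) assumption and having a bounded feasible set, the SOS dual has no duality gap with the primal. Applied here, this yields $d_s^\star = p_s^\star$. The main obstacle I anticipate is purely bookkeeping: writing the matching $\boldsymbol{\Phi}_\alpha$ for each equality constraint of \eqref{eq:primalSplit} correctly, in particular for the boundary measures $\mu_k^{a\cap b}$ whose contributions enter with opposite signs on the two sides of a split, and making sure that the support sets used in the GMP match those declared in \eqref{eq:primalSplit}. Once this indexing is done carefully, the appeal to \cite{Tacchi2021} is immediate and completes the proof.
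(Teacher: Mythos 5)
Your overall strategy --- cast \eqref{eq:primalSplit} as a generalized moment problem, certify bounded masses via Lemma~\ref{lem:boundedMass}, and then appeal to an abstract duality theorem --- has the right shape, but the final appeal targets the wrong duality pair. Theorem~\ref{th:infinite_gap} asserts $d_s^\star = p_s^\star$ for the \emph{infinite-dimensional} pair: \eqref{eq:dualOriginalSPlit} is an LP over piecewise-polynomial/continuous functions with pointwise nonnegativity constraints, not the SOS-tightened problem \eqref{eq:dualSOS}. The result you quote from \cite{Tacchi2021} --- that under the ball (Archimedean) assumption and bounded masses ``the SOS dual has no duality gap with the primal'' --- is the relaxation-level statement; in this paper it is what proves the separate Lemma~\ref{lem:strong_dual_relaxed}, not Theorem~\ref{th:infinite_gap}. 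Invoking it here would establish zero gap between \eqref{eq:dualSOS} and the moment relaxation of \eqref{eq:primalSplit}, which is a different (and for this theorem insufficient) conclusion. Consistently with this slippage, you import Assumption~\ref{ass:balls} into the hypotheses, which the theorem does not assume and which is not needed at the infinite-dimensional level: the Archimedean condition is a Putinar-type representation hypothesis, whereas infinite-dimensional LP duality only needs compactness of the supports (automatic here, since all $X_i$, $U$, $[T_k,T_{k+1}]$ are compact) together with bounded masses. Note also that Lemma~\ref{lem:boundedMass} itself requires only Assumption~\ref{ass:splitflow} and connectedness of $X$, not the ball constraint.

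The fix is to replace the concluding step by the classical infinite-dimensional LP duality theorem \cite[Theorem 3.10]{nash1987linear}, following the same arguments as \cite[Theorem 2]{milanroa}: the feasible set of \eqref{eq:primalSplit} is contained in a weak-$\star$ compact set by Lemma~\ref{lem:boundedMass}, and the operators $\mathcal{L}'$ and the time-marginal and boundary operators appearing in \eqref{eq:primalSplit} are weak-$\star$ continuous, so the relevant image cone is closed and the duality gap vanishes. Your GMP bookkeeping of the constraints and your use of the boundedness lemma can be kept essentially verbatim; only the final citation and the hypothesis list need to change. Alternatively, if you insist on routing through \cite{Tacchi2021}, you must invoke its duality statement for the GMP and its continuous-function dual, not the SOS-level one.
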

\begin{proof}
    The proof is based on classical infinite-dimensional LP duality theory result \cite[Theorem 3.10]{nash1987linear}, following the same arguments as in \cite[Theorem 2]{milanroa}. The key ingredients to these arguments are the boundedness of masses established in Lemma~\ref{lem:boundedMass} and the continuity of the operators $\mathcal{L}'$ appearing in (\ref{eq:primalSplit}) that holds trivially.
\end{proof}
\corr{
\begin{lemma}
	\label{lem:strong_dual_relaxed}
	The SOS relaxation \eqref{eq:dualSOS} of \eqref{eq:dualOriginalSPlit} and its dual, the moment relaxation (not presented) of \eqref{eq:primalSplit}, have zero duality gap if 
	Assumptions \ref{ass:balls} and \ref{ass:splitflow} hold.
\end{lemma}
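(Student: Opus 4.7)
The plan is to reduce the statement to the general strong-duality theorem for the moment-SOS hierarchy of a generalized moment problem (GMP) established in \cite{Tacchi2021}. That theorem requires two inputs beyond the GMP form itself: the quadratic modules associated with the supports must be Archimedean, and the feasible measures of the primal GMP must have uniformly bounded masses. Both hypotheses are delivered by the assumptions of the lemma together with the work already done in this section.

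First, I would verify that \eqref{eq:primalSplit} fits the template \eqref{eq:GMP}. The paragraphs preceding the lemma essentially do this: testing the mass-conservation constraint against a monomial \(\Psi_1(x)\) and the Liouville-type constraint against \(\Psi_2(t,x)\) reduces each equality to a linear relation \(\int \vect{\Phi}_\alpha\,d\vect{\mu}=a_\alpha\) whose right-hand side is either a Lebesgue moment on \(X_i\) or zero. All decision measures \(\mu_{T_k}^i\), \(\hat\mu_0^i\), \(\mu_k^i\), and \(\mu_k^{a\cap b}\) are nonnegative and supported on compact basic semialgebraic sets, so they belong to the product measure cone appearing in \eqref{eq:GMP}.

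Second, Assumption \ref{ass:balls} is precisely what is needed to make each quadratic module associated with \(\mathbf{K}_i\) Archimedean, which is the standing hypothesis of the Putinar-type Positivstellensatz underlying the SOS construction in \cite{Tacchi2021}. Third, Lemma \ref{lem:boundedMass}, whose proof uses Assumption \ref{ass:splitflow}, provides a uniform mass bound on every feasible measure of \eqref{eq:primalSplit}, including the new boundary-flow measures \(\mu_k^{a\cap b}\). With these three ingredients in place, the zero-duality-gap theorem of \cite{Tacchi2021} applies at every relaxation order \(d\) and yields the claim.

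The most delicate step — and the most likely source of a subtle issue — is the indexing of the boundary-flow measures \(\mu_k^{a\cap b}\) in the GMP form. These measures are the genuinely new ingredient introduced by the state-space splitting and are absent from \cite{milanroa}, so I would want to confirm that they are correctly enumerated as components of \(\vect{\mu}\) in \eqref{eq:GMP}, that the equality constraints they appear in remain linear in the moments of \(\vect{\mu}\) (which is automatic since \(f\) and \(h_{a,b}\) are polynomial), and that the uniform mass bound of Lemma \ref{lem:boundedMass} really does cover them — which it does, via the construction of the sets \(\mathcal{P}(h,i)\), \(\mathcal{E}(h,i)\), and the compound measure \(\mu_s^{\mathcal{E}}\) in that lemma's proof.
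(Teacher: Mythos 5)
Your overall route is the same as the paper's: reduce to the general zero-duality-gap result of \cite{Tacchi2021} (Proposition 6 there), with Assumption \ref{ass:balls} supplying the Archimedean/compactness hypothesis and Lemma \ref{lem:boundedMass} supplying the mass bounds. But there is one genuine gap at the final step. The hypothesis of \cite{Tacchi2021} that you need for the order-$d$ relaxation is boundedness of the feasible set of the \emph{moment relaxation}, i.e.\ a uniform bound on the zeroth-order pseudo-moments of the truncated pseudo-moment sequences feasible at order $d$. What Lemma \ref{lem:boundedMass} gives you is a bound on the masses of genuine measures feasible for the infinite-dimensional problem \eqref{eq:primalSplit}. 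A pseudo-moment sequence feasible for the order-$d$ relaxation need not be the moment sequence of any measure, so the measure-level bound does not transfer automatically; writing ``the theorem applies at every relaxation order $d$'' skips exactly this point.

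The fix is short but has to be said: the proof of Lemma \ref{lem:boundedMass} derives all of its mass bounds by testing the linear constraints against only the functions $\Psi \equiv 1$ and $\Psi = t$, i.e.\ polynomials of degree at most one. The corresponding linear equality constraints on moments are enforced in the moment relaxation at every order $d \geq 1$, so the identical chain of identities — now read as identities among pseudo-moments — bounds the zeroth-order pseudo-moments of every feasible truncated sequence, including those of the boundary measures $\mu_k^{a\cap b}$ via the compound-measure construction. This is precisely the observation the paper uses to close the argument, and your proposal should include it explicitly. Your remaining checks (GMP form, linearity of the boundary-flow constraints in the moments, coverage of $\mu_k^{a\cap b}$ by the $\mathcal{P}$/$\mathcal{E}$ summation) are correct and consistent with the paper.
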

\begin{proof}
    The proof follows directly from \cite[Proposition 6]{Tacchi2021}, where the only missing part is boundedness of the masses of the relaxed problem, which follows from boundedness of masses of \eqref{eq:primalSplit} \new{since the proof of the boundedness of the masses of \eqref{eq:primalSplit} in Lemma~\ref{lem:boundedMass} uses only constant or linear test functions.}
\end{proof}
}

\section{ SDP Differentiation }
\label{sec:sdp_diff}
We will consider the SDP in the form of a primal-dual pair, parametrized by \(\theta\). To stay consistent with the usual notation, we shall abuse ours and use \(x\) as a vector of decision variables in the context of semidefinite programming.
\begin{equation}
	\label{eq:sdp_prim_dual}
	\begin{alignedat}{2}
	p^\star(\theta) = \min c(\theta)^\top x \\ 
	\text{s.t. } A(\theta)x + s= b(\theta)\\
	s \in \mathcal{K}
	\end{alignedat}
	\quad
	\begin{alignedat}{2}
		d^\star(\theta) = \min b(\theta)^\top y \\ 
		\text{s.t. } A^\top(\theta)y + c  = 0\\
		y \in \mathcal{K}^\star,
		\end{alignedat}
\end{equation}
with variables \(x \in \mathbb{R}^{n} \), \(y \in \mathbb{R}^{m}\), and \(s \in \mathbb{R}^{m}\)
with data \(A \in \mathbb{R}^{m\times n} \), \(b \in \mathbb{R}^{m} \), and \(c \in \mathbb{R}^{n} \).
We can assume strong duality due to the \corr{Lemma \ref{lem:strong_dual_relaxed} }and therefore \(p^\star = -d^\star\). 
The KKT conditions are 
\begin{equation}
	Ax +s = b,A^\top y + c = 0,s^\top y = 0,s \in \mathcal{K}, y \in \mathcal{K}^\star.
\end{equation}
Notice that the conic constraints do not depend on \(\theta\), this reflects the fact that neither the number of the split regions \(X_i\) nor the number of the time splits \(T_k\) changes. 

We can describe the (primal) SDP concisely as a function of \(\theta\) as
\begin{equation}
\label{eq:solution_fun_diff}
	p^\star(\theta) = \mathcal{S}(A(\theta),b(\theta),c(\theta)) = \mathcal{S}(\mathcal{D}(\theta)),
\end{equation}
where \(\mathcal{D}(\theta)\) is a shorthand for all the program data depending on \(\theta\). The goal of this paper is to find a (sub)optimal set of parameters \(\theta^\star\) such that
\begin{equation}
	\label{eq:argmin_sdp}
	\theta^\star = \argmin_\theta \mathcal{S}(\theta).
\end{equation}
\corr{Note that we are dealing with multiple meanings for the optimal value \(p^\star\); let us clarify that \(p^\star(\theta)\) is the minimal objective value of \eqref{eq:sdp_prim_dual} for \emph{some} parameters \(\theta\), while \(p^\star(\theta^\star)\) is the minimal objective value for the optimal parameters \(\theta^\star\) which is what we are after. In the context of ROA, \(p^\star(\theta^\star)\) corresponds to the ROA with optimal splits.}
\corr{
We shall tackle \eqref{eq:argmin_sdp} by 
assuming differentiability of \(\mathcal{S}\)
which will be rigorously proven later, in Lemma \ref{lem:differentiability}.}
Assuming an existing gradient, we can search for \(\theta^\star\) via a first-order method, which iteratively
updates the parameters \(\theta\) by using the gradient of \(s\) as
\begin{equation}
	\theta_{k+1} = \theta_{k} - \gamma \nabla \mathcal{S}(\mathcal{D}( \theta))
\end{equation}
for some initial guess \(\theta_0\)
(e.g. obtained from the recommendations in \cite{Cibulka2022})
and a stepsize \(\gamma\).
The stepsize can be a function of \(k\) and/or
some internal variables of the concrete gradient descent algorithm.
The following section will present two ways of calculating \(\nabla \mathcal{S}(\mathcal{D}( \theta))\).

The parameter \(\theta\) is considered to be a column vector of 
size \(n_\theta\). The perturbation of the vector \(\theta\) in direction \(k\) is 
\begin{equation}
	\label{eq:phi_perturbation}
 \theta + \epsilon_\theta e_k = \begin{bmatrix}
		\theta_1 \\ \vdots \\ \theta_k + \epsilon_k \\  \vdots \\ \theta_{n_\theta}
	\end{bmatrix}
\end{equation}
where \(e_k\) is the \(k\)th vector of standard base, containing 
\(1\) at \(k\)th coordinate and \(0\) everywhere else. 
The scalar \(\epsilon_k\) is the perturbation size.

The object \(\mathcal{D}(\theta)\) is to be understood as a vector of 
all the \corr{SDP data,} 
 for example
\begin{equation}
	\mathcal{D} = [\new{A_{1,1},\dots,A_{m,n}},b_1,\dots,b_m,c_1,\dots,c_n]^\top,
\end{equation}
where we dropped the dependence on \(\theta\) to lighten up the notation.

\subsection{Methods for finding the derivative}
\label{sec:methods_diff}

\subsubsection{Finite differences}
\label{sec:finitediff}
The estimate of the \new{derivatite} of \eqref{eq:solution_fun_diff}
at the point \(\theta\)	in the direction \(e_k\) 
is 
\begin{equation}
	\frac{\Delta \mathcal{S} }{\Delta \theta_k } = 
	\frac{\mathcal{S} (\mathcal{D}(\theta + \epsilon_\text{f}e_k)) - \mathcal{S}(\mathcal{D}(\theta))}{\epsilon_\text{f}},
\end{equation}
where the step \(\epsilon_{\rm f}\) is a free parameter.
The gradient is then estimated as 
\begin{equation}
	\Delta \mathcal{S} = \begin{bmatrix}
		\frac{\Delta p^\star }{\Delta \theta_1 } & \dots &
		\frac{\Delta p^\star }{\Delta \theta_{n_\theta} }.
	\end{bmatrix}
\end{equation}

\subsubsection{Analytical \new{derivative}}
The gradient of \(\mathcal{S}\) can be written as
\begin{equation}
	\nabla \mathcal{S}(\mathcal{D}( \theta))
	= \frac{\D  \mathcal{S}(\mathcal{D})}{\D  \theta} = 
	\frac{\D  \mathcal{S}}{\D  \mathcal{D}} \frac{\D  \mathcal{D}}{\D  \theta}.
\end{equation}
The first fraction \(\frac{\D  \mathcal{S}}{\D  \mathcal{D}}\)
signifies how the problem solution changes with respect to the input data.
This problem has been tackled in \cite{diff_sdp2} for 
general conic programs; here we shall address some specific issues tied to SOS-based SDPs.

The second fraction \(\frac{\D{\mathcal{D}}}{\D{\theta}}\) 
shows how the problem data changes with the parameters \(\theta\). 
Modern tools (YALMIP \cite{yalmip},\cite{Lofberg2009}, GloptiPoly \cite{GloptiPoly}, 
The sum of Squares Programming for Julia \cite{weisser2019polynomial},\cite{legat2017sos}) allows the user to write directly the polynomial problem \eqref{eq:dualOriginalSPlit} or its SOS representation,
alleviating the need for constructing the problem data \((A,b,c)\) directly. Despite the undeniable advantages this abstraction brings, it makes it more difficult to work on the problem data directly, since these parsers are usually not created to be autodifferentiable. For this reason, we shall estimate the derivatives \(\D{\mathcal{D}}\) numerically, striking a tradeoff between convenience and accuracy.

For example, sensitivity to \(\theta_k\) is obtained as
\begin{equation}
	\frac{\partial \mathcal{D}}{\partial \theta_k} = \frac{1}{|P_{\epsilon}|} \sum_{\epsilon \in P^k_\epsilon}
	\frac{\mathcal{D}(\theta + \epsilon e_k) - \mathcal{D}(\theta)}{\epsilon},
\end{equation}
where \(P^k_\epsilon = \{-\epsilon_d, \dots, \epsilon_d \}\) is 
a set of perturbation steps sizes of the parameter \(\theta\) in the direction \(k\). Each evaluation of \(\mathcal{D}\) is to be understood
as a call to one of the aforementioned programming tools which constructs \eqref{eq:sdp_prim_dual} from \eqref{eq:dualOriginalSPlit}.

The following subsection will explain how to obtain the derivative 
\(\frac{\D s}{\D \mathcal{D}}\)
\subsubsection*{Obtaining \(\frac{\D{\mathcal{S}}}{\D{\mathcal{D}}}\) }
This subsection summarizes the approach 
listed in \cite{diff_sdp2} and \cite{Busseti2018},
while focusing on the specific case of the SOS-based SDPs. 
The generic approach presented in 
\cite{diff_sdp2} is not immediately usable for our concrete problem,
therefore we shall provide some remedies in the following subsections.

We shall first quickly review the generic approach in \cite{diff_sdp2}.
In the following text, \(\Pi_{\mathcal{A}}\) shall denote a projection onto the set \(\mathcal{A}\)
and \(\Pi\) a projection onto \(\mathbb{R}^n \times \mathcal{K}^\star \times \mathbb{R}_{+}\).
We shall also drop the dependence on \(\theta\) to lighten up the notation. 
Lastly, we abuse our notation again 
and use \(v\) and \(w\) to denote vectors corresponding
to the primal-dual solution of \eqref{eq:sdp_prim_dual}
in the context of semidefinite programs.

The derivative of the solution can be written as
\begin{equation}
	\D{(\mathcal{S})} = \D{(c^\top x)} = 
	\D{c^\top} + c^\top \D{x} = \D{b^\top}y + b \D{y},
\end{equation}
where the primal-dual derivatives are obtained as 
\begin{equation}
	\begin{bmatrix}
		\D{x} \\ \D{y}
	\end{bmatrix} = 
	\begin{bmatrix}
		\D{u} - x^\top \D{w} \\ 
		\D{\Pi}_{ \mathcal{K}^\star} (v)^\top \D{v} - y^\top \D{w}
	\end{bmatrix},
\end{equation}
where the variables \(u,v,\textrm{ and } w\) are 
related to the solution by 
\begin{equation}
	z = \begin{bmatrix}
		x \\ y-s \\ 1 
	\end{bmatrix} = \begin{bmatrix}
		u \\ v \\ w
	\end{bmatrix}.
\end{equation}
Note that is \(w\) a normalization parameter 
which is in our case always equal to \(1\), and thus not necessary;
we only keep it to stay consistent with \cite{diff_sdp2}. 
The meaning and other possible values of \(w\) are explained
in \cite{Busseti2018}.
The derivative of \(z\) is obtained as the solution to
 \begin{equation}
	\label{eq:Mdz_equation}
	M \cdotp \D{z} = g,
 \end{equation}
where \(M = ((Q - I)\D{\Pi}(z) + I )/ w\) and 
\(g = \D{Q} \cdotp \Pi(z/|w|)\). Note that the matrix \(M\) depends only on 
the current solution, not the perturbations; we shall exploit it later in this section.
The matrices \(Q\) and \(\D{Q}\) are defined as 
\begin{equation}
	Q = \begin{bmatrix}
		0 & A^\top & c\\
		-A & 0 & b \\
		-c^\top & -b^\top & 0
	\end{bmatrix},
	\D{Q} = \begin{bmatrix}
		0 & \D{A}^\top & \D{c}\\
		-\D{A} & 0 & \D{b} \\
		-\D{c}^\top & -\D{b}^\top & 0
	\end{bmatrix}.
\end{equation}

Let us now write relevant cone projections and their derivatives:

\begin{equation}
	\Pi_{\mathbb{R}^n}(x) = x \quad \forall x \in \mathbb{R},
\end{equation}
\begin{equation}
	\D{\Pi_{\mathbb{R}^n}}(x) = 1 \quad \forall x \in \mathbb{R},
\end{equation}
\begin{equation}
	\Pi_{\{0\}}(x) = 0 \quad \forall x \in \mathbb{R},
\end{equation}
\begin{equation}
	\D{\Pi_{\{0\}}}(x) = 0 \quad \forall x \in \mathbb{R},
\end{equation}
\begin{equation}
	\Pi_{\mathbb{R}_{+}}(x) = \max(x,0) \quad \forall x \in \mathbb{R},
\end{equation}
\begin{equation}
	\D{\Pi_{\mathbb{R}_{+}}}(x) = \frac{1}{2}(\mathrm{sign}(x) + 1) \quad \forall x \in \mathbb{R}\setminus \{0\},
\end{equation}
\begin{equation}
	\Pi_{\mathbb{S}_{+}}(X) = U \Lambda_{+} U^\top \quad \forall X \in \mathbb{S}^{r\times r},
\end{equation}
where \(X = U \Lambda U^\top\) is the eigenvalue decomposition of $X$, i.e., \(\Lambda\) is a diagonal matrix  of the eigenvalues of \(X\) and $U$ an orthonormal matrix. The matrix \(\Lambda_{+}\) is obtained as 
\(\max(\Lambda,0)\), element-wise.

Finally, the derivative of \({\Pi_{\mathbb{S}_{+}}}\) at a non-singular point \(X\) in the direction \(\tilde{X} \in \mathbb{R}^{r\times r}\) is
\begin{equation}
	\D{\Pi}_{\mathbb{S}_{+}}(X)(\tilde{X}) = U(B \circ (U^\top \tilde{X} U))U^\top,
\end{equation}
where \(\circ\) is element-wise product and
\begin{equation}
	B_{i,j} = \begin{cases}
		0 & \textrm{ for } i \leq k, j \leq k\\
		\frac{|\lambda_i|}{|\lambda_i| + |\lambda_j|} & \textrm{ for } i > k, j\leq k\\
		\frac{|\lambda_j|}{|\lambda_i|+|\lambda_j|} & 
		\textrm{ for } i \leq k, j > k
		\\
		1 & \textrm{ for } i > k, j > k,
	\end{cases}
\end{equation}
where \(k\) is the number of negative eigenvalues of \(X\),
and \(U\) is chosen such that the eigenvalues \(\lambda_i\) in the diagonal matrix \(\Lambda\) in the  decomposition \(X = U \Lambda U^\top\) are sorted in
increasing order, meaning that the first \(k\) eigenvalues are negative.



\subsubsection*{Exploiting problem structure}
The most demanding task in this approach is solving
\begin{equation}
	\label{eq:leastsquare_original}
	M \cdotp \D{z} = g
\end{equation}
for \(\D{z}\). The paper \cite{diff_sdp2} suggests the use of LSQR \cite{Paige1982} instead of direct solve via factorization when the matrix \(M\) is too large to be stored in dense form.

Luckily, we can also factorize the matrix \(M\) in its sparse form, via free packages such as SuiteSparse \cite{suitesparse} (the default factorization backend in MATLAB \cite{MATLAB} and Julia \cite{bezanson2017julia}), Intel MKL Pardiso \cite{mklpardiso}, and MUMPS \cite{MUMPS:1},\cite{MUMPS:2}. Moreover, recall that we have \(n_{\theta}\) parameters and have to solve \eqref{eq:leastsquare_original} in \(n_{\theta}\) directions
\corr{resulting in
\begin{equation}
	\label{eq:leastsquare_original_multiple}
	M \cdotp \D{z} = [g_1,g_2,\dots, g_{n\theta}].
\end{equation}}
Since the matrix \(M\) does not depend on the perturbed data, we need to factorize it only once to solve \eqref{eq:leastsquare_original} for all \(n_{\theta}\) directions of \(\theta\).

In this paper, we factorize \(M\) by the QR 
factorization \cite{QR1},\cite{QR2} as
\begin{equation}
	M = Q R,
\end{equation}
where $Q$ is orthonormal and \(R\) is upper triangular. The equation \eqref{eq:leastsquare_original} then becomes
\begin{equation}
\begin{alignedat}{2}
QR \cdotp \D{z} = g \\ 
R \cdotp \D{z} = Q^\top g, 
\end{alignedat}
\end{equation}
which is solved by backward substitution due to \(R\) being a triangular matrix. 


All the aforementioned methods (Finite differences, LSQR, and QR) are compared in Section \ref{sec:comparison}.

\subsection{Conditions of differentiability}
As was mentioned above, the analytical approach for obtaining the derivative is preferable to the finite differences. However, the analytical approach also assumes differentiability of the ROA problem with respect to the split positions, which is proved in the following Lemma.
{We use the notion of genericity from \cite[Definition 19]{complementarity_nondegen}. 
We call a property \(\mathit{P}\) of an SDP  \emph{generic} if it holds for Lebesgue almost all parameters $(A,b,c)$ of the SDP. In other words, the property fails to hold on a set of zero Lebesgue measure.\corr{Concretely, we will use the genericity of uniqueness 
\cite[Theorems 7,10, and 14]{complementarity_nondegen} and strict complementarity \cite[Theorem 15]{complementarity_nondegen} of the primal-dual solutions to \eqref{eq:sdp_prim_dual}.}
}
\begin{lemma}
	\label{lem:differentiability}
	\corr{The mapping from the split positions to the infimum of the SOS-relaxation \eqref{eq:dualOriginalSPlit} is differentiable
	at a point \({\theta}\) if
	 assumptions \ref{ass:balls} and \ref{ass:splitflow} hold,
	 and the
	 primal-dual solution of \eqref{eq:sdp_prim_dual}
	is
	 unique and strictly complementary
	for the problem data \(\mathcal{D}(\theta)\).}
	
\end{lemma}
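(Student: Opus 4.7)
The strategy is to write $p^\star(\theta) = \mathcal{S}(\mathcal{D}(\theta))$ as a composition and apply the chain rule, so it suffices to show that (i) the data map $\theta \mapsto \mathcal{D}(\theta)$ is differentiable at $\theta$, and (ii) the SDP value map $\mathcal{D} \mapsto \mathcal{S}(\mathcal{D})$ is differentiable at $\mathcal{D}(\theta)$. Step (i) is a routine inspection: by construction, the entries of $A(\theta)$, $b(\theta)$, $c(\theta)$ are obtained from the SOS program \eqref{eq:dualSOS} by expanding the inner products $\bs{s}^\top \bs{g}$ in a monomial basis and collecting coefficients, together with the Lebesgue moment vector $l_i$ appearing in the objective. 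Both the polynomials $\bs{g^X_{i,k}}$, $\bs{g^\tau_k}$ that encode the split boxes and time intervals, and the moments of the Lebesgue measure over these boxes, are polynomial functions of the split coordinates in $\theta_X \cup \theta_T$; hence $\theta \mapsto \mathcal{D}(\theta)$ is in fact $C^\infty$ (indeed polynomial) on the open region where no two splits coincide.

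Step (ii) is the analytic heart. The plan is to invoke the conic-differentiation framework of \cite{diff_sdp2,Busseti2018} and show that the linear system $M\,\D z = g$ in \eqref{eq:Mdz_equation} has a unique solution, so that the primal-dual solution pair, and therefore the scalar $\mathcal{S}=c^\top x$, depends differentiably on $\mathcal{D}$. By Lemma~\ref{lem:strong_dual_relaxed} (whose hypotheses are exactly Assumptions~\ref{ass:balls} and \ref{ass:splitflow}) strong duality holds at $\theta$, so the KKT system characterises the optimum; by the uniqueness assumption the pair $(x^\star,y^\star)$ and therefore $z^\star$ is a single well-defined point; and by strict complementarity, whenever the cone $\mathcal{K}$ contains a PSD block, the corresponding primal slack $S$ and dual slack $Z$ satisfy $S+Z\succ 0$, which in the spectral parametrisation means that the eigenvalue $0$ does not appear in the operator $X$ at which we evaluate $\D\Pi_{\mathbb{S}_+}$. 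Consequently the denominators $|\lambda_i|+|\lambda_j|$ in the matrix $B$ are strictly positive, $\D\Pi_{\mathbb{S}_+}$ is a well-defined (Fréchet) derivative at $z^\star$, and likewise $\D\Pi_{\mathbb{R}_+}$ is well-defined since strict complementarity rules out the nondifferentiable point $x=0$ of $\max(x,0)$.

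Given these three ingredients, the plan is then to argue, following \cite[Sec.~3]{diff_sdp2}, that $M = ((Q-I)\D\Pi(z^\star)+I)$ is nonsingular: any vector in its kernel corresponds to an admissible perturbation of the KKT system that preserves primal and dual feasibility and complementarity, which under uniqueness forces the perturbation to vanish. The implicit function theorem applied to the normalised residual map of \cite{Busseti2018} then yields that $z^\star$, and hence $\mathcal{S}=c^\top x^\star$, is a $C^1$ function of $\mathcal{D}$ in a neighbourhood of $\mathcal{D}(\theta)$. Composing with the smooth data map $\theta\mapsto\mathcal{D}(\theta)$ from step (i) finishes the proof.

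The main obstacle is the invertibility of $M$: one has to check that strict complementarity together with primal-dual uniqueness is enough to exclude nontrivial elements in the kernel of $M$ for the specific cone structure of \eqref{eq:dualSOS}, namely a Cartesian product of free variables, zero cones (equality constraints), nonnegative orthants, and several PSD blocks (one per SOS multiplier). Because the derivative of $\Pi$ on a product cone is the block-diagonal product of the derivatives on the factors, and each factor has been shown above to be well-defined and single-valued under our hypotheses, the kernel analysis reduces to the standard nondegeneracy argument, which is exactly where the genericity of uniqueness and strict complementarity invoked from \cite{complementarity_nondegen} is used.
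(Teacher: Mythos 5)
Your proposal follows essentially the same route as the paper's proof: the decisive step in both is that strict complementarity forces the spectra of the PSD blocks of $y$ and $s$ to have complementary supports, so $Y-S$ is nonsingular and $\D\Pi_{\mathbb{S}_+}$ is well defined at $z^\star$, with uniqueness supplying the remaining hypothesis of the framework of \cite{diff_sdp2}. You are in fact somewhat more thorough than the paper on two points it glosses over --- the (polynomial, hence smooth) dependence of the program data $\mathcal{D}$ on $\theta$, and the invertibility of $M$, though your kernel argument for the latter is only sketched.
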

\begin{proof}
	The conditions of differentiability according to \cite{diff_sdp2} are uniqueness 
	of the solution and differentiability of the projection \(\Pi\) of the vector \(z\), needed
	for construction of \eqref{eq:Mdz_equation}.
	Since the uniqueness is assumed, only the 
	projection \(\Pi\) needs to be investigated.

		Assuming \((x,y,s)\) to be the optimal
	primal-dual solution,
	the projection \(\Pi(z)\) can be written as
	\begin{equation}
		\Pi(z) = \begin{bmatrix}
			\Pi_{\mathbb{R}^{n}}(x) \\ 
			\Pi_{\mathcal{K}^{\star}}(y-s) \\ 
			\Pi_{\mathbb{R}_{+}}(w) 
		\end{bmatrix},
	\end{equation}
	where \(\Pi_{\mathbb{R}^{n}}\) is differentiable everywhere and \(\Pi_{\mathbb{R}_{+}}\)
	is also differentiable since we are at the solution with \(w=1\).
	The only cause for concern is \(\Pi_{\mathcal{K}^{\star}}\), where \(\mathcal{K}^\star\) 
	is a product of the positive semidefinite cone \(\mathbb{S}_{+}\) and the free/zero cone.
	Therefore \(\Pi_{\mathcal{K}^\star}\) is differentiable if and only if \(\Pi_{\mathbb{S}_{+}}\) is differentiable.

	Let us denote the semidefinite parts of \(y\) and \(s\) 
	as matrices \(Y\) and \(S\) respectively.
	The matrices \(Y\) and \(S\) commute, since \(YS = SY = 0\). 
	They also share a common set of eigenvectors \(Q\) such that \(Q^\top Q = I\),
	making them simultaneously diagonalizable as 
	\begin{equation}
		Y = Q \Lambda_Y Q^\top
	\end{equation}
	\begin{equation}
		S = Q \Lambda_S Q^\top,
	\end{equation}
	where \(\Lambda_Y\) and \(\Lambda_S\) are diagonal matrices with eigenvalues on the diagonal.
	The product \(YS\) can be then written as 
	\begin{equation}
		YS = Q \Lambda_Y Q^\top Q \Lambda_S Q^\top = Q \Lambda_Y \Lambda_S Q^\top,
	\end{equation}
	and for \(i^{\rm th}\) eigenvalue we get the condition
\begin{equation}
	\label{eq:slackness}
	\lambda^i_s \lambda^i_y = 0.
\end{equation}
	 Strict complementarity of the SDP solution means that the ranks of 
	\(Y\) and \(S\) sum up to full rank.
	Taking the sum, we can write 
	\begin{equation}
		Y + S  = Q (\Lambda_Y +  \Lambda_S) Q^\top
	\end{equation}
	and since \(\Lambda_Y\) and \(\Lambda_S\) are diagonal, we can claim that 
	\begin{equation}
		\label{eq:strictcomp}
		\lambda^i_s + \lambda^i_y \neq 0,
	\end{equation}
	 otherwise the rank of \(Y+S\) would decrease.

	By putting \eqref{eq:slackness} and \eqref{eq:strictcomp} together, we conclude 
	that for \(i^{\rm th}\) eigenvalues \(\lambda^i_s\) and \(\lambda^i_y\),
	one has to be zero and the other nonzero. 
	This implies that the matrix \(Y-S\) will not be singular and thus the 
	projection \({\Pi}_{\mathbb{S}_{+}}(Y-S)\) is differentiable, and therefore the whole SDP \eqref{eq:sdp_prim_dual} is differentiable.
\end{proof}




\subsection{Comparison of differentiation approaches}
\label{sec:comparison}
The Figure \ref{fig:comp_qr_lsqr_fin} shows 
scaling of the proposed methods with an increasing number of parameters
and the Figure \ref{fig:comp_qr_lsqr_fin_fixed_param} investigates 
their scaling 
with the degree of approximation.

We see that using 
QR factorization to solve \eqref{eq:Mdz_equation} clearly outperforms both LSQR, suggested in \cite{diff_sdp2}, and Finite differences \ref{sec:finitediff}.
We see that QR is preferable, since the factorization is 
done only once for all parameters whereas LSQR \corr{needs to solve 
\eqref{eq:leastsquare_original_multiple} \(n_{\theta}\)-times}, similarly for Finite differences which solves 
the ROA for each parameter individually.

The concrete software packages used are 
Krylov.jl \cite{montoison-orban-krylov-2020} for LSQR and SuiteSparse \cite{suitesparse} for QR factorization, both 
used through their interfaces to the programming language Julia
\cite{bezanson2017julia}.


\begin{figure}[!htb]
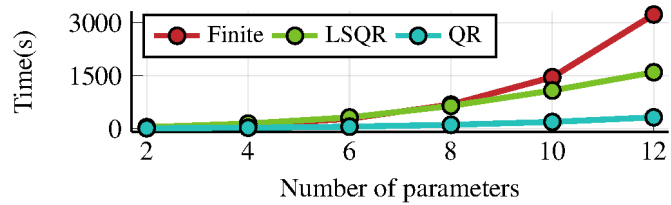

	\centering
	\begin{flushright}
		\inputnamedtex{comparison_QR_LSQR_finite_double_d6}
	\end{flushright}
\caption{Computation time needed to obtain derivatives 
for degree 6 double integrator with respect to the number of parameters. The LSQR method always reached 
the maximum number of iterations, which was set to 1000.
The times for LSQR and QR include the cost of obtaining the 
matrices \(M\) and \(g\) in \eqref{eq:Mdz_equation}.
}
\label{fig:comp_qr_lsqr_fin}
\end{figure}

\begin{figure}[!htb]
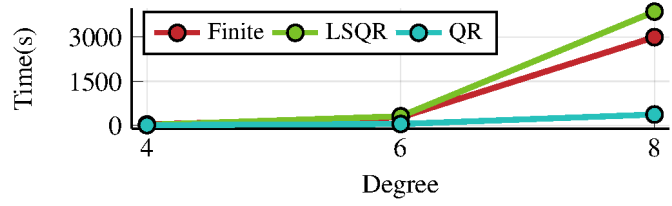

	\centering
	\begin{flushright}
		\inputnamedtex{comparison_QR_LSQR_finite_double_6params}
	\end{flushright}
\caption{Computation time needed to obtain derivatives 
for double integrator with 6 parameters and increasing degree. 
 The LSQR method always reached 
the maximum number of iterations, which was set to 1000.
The times for LSQR and QR include the cost of obtaining the 
matrices \(M\) and \(g\) in \eqref{eq:Mdz_equation}.
}
\label{fig:comp_qr_lsqr_fin_fixed_param}
\end{figure}




\section{Numerical examples}
\label{sec:numerical_examples}
This section presents the optimization results 
on Double integrator and Brockett integrator,
the optimization results are presented in \ref{sec:opt_results}.
The section is divided into
optimization of \emph{low degree} and \emph{high degree}
problems, the degree is to be understood as the 
degree of the polynomial variables in \eqref{eq:dualOriginalSPlit}.

All of the examples use ADAM \cite{adam} as the first-order method for the optimization;
the stepsize of ADAM was \(0.05\) and the decay rates \(0.8\) and \(0.9\). \new{The results were obtained on a computer with 3.7GHz CPU and 256GB RAM.}

To simplify the following, let us define \(\vect{\theta}_{d}\) as the parameter path 
obtained by optimizing degree \(d\) problems, \corr{i.e. \(\vect{\theta}_{d}\) contains the split locations of each iteration of the optimization algorithm.}
Similarly, 
\(p^\star_d(\vect{\theta})\) will denote the vector  of optimal values for \new{ degree \(d\) approximation} calculated along \(\vect{\theta}\). For example, \(p^\star_6(\vect{\theta}_4)\) denotes a vector of optimal values of degree 6 approximation, evaluated on parameters obtained from \new{optimizing the split locations of a degree 4 approximation}.

\subsection{Low degree}
\label{sec:opt_results}
\subsubsection*{Double integrator}
\begin{samepage}
	First, we consider the double integrator example 
	from \cite[9.3]{milanroa} which is defined as
	\[\dot{x}_1 = x_2, 
	\  \dot{x}_2 = u\] with 
   \(X = [-0.7\times 0.7]\times[-1.2 \times 1.2]\), \(X_T = \{0\}\) and \(T = 1\).
\end{samepage}
Figure \ref{fig:descent_Double_d4np_4}
shows the results for optimization of the \new{split locations with degree 4 polynomials}.
The initial conditions were equidistantly placed split positions. The dotted black line shows the estimated global optimum, which was attained by sampling the parameter space
on a grid of square cells with sizes of \(0.1\). A total of 25116 unique split positions were evaluated in 23 hours.

The optimizer attained by the proposed method was
\(\theta^{\star}  =  \begin{bmatrix}
	-0.059 & 0.070 & -0.017 & 0.015
   \end{bmatrix}\) while the global estimate was at 
   \({\theta}^{\star}_{\rm g} = \begin{bmatrix}
	0&0.2&-0.4&0
   \end{bmatrix}\). 

The obtained optimum improves the initial guess by \(58\%\) with respect to the global optimum estimate, and it was found in 2 minutes whereas the global estimate took 23 hours.

\begin{figure}[!htb]
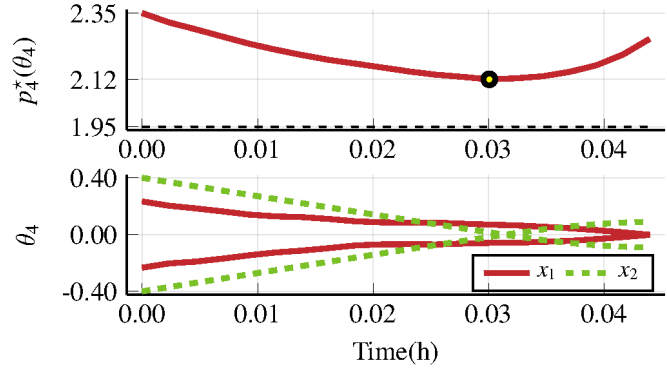

	\centering
	\begin{flushright}
		\inputnamedtex{descent_Double_d4np_4}
	\end{flushright}
\caption{\new{Double integrator: degree 4 polynomials and 4 splits}. \new{The optimal values of the SDP relaxation of the ROA are shown in the top plot.} The bottom plot shows how the split positions evolved during the optimization process. There were two splits for each state variable. The black-and-yellow dot represents the attained minimum. The estimated global optimum is shown by a black dotted line.}
\label{fig:descent_Double_d4np_4}
\end{figure}

The Figure \ref{fig:valuefun_double} shows 1-dimensional line segment 
parametrized by \(r \in [0,2]\), connecting 
 the attained solution \(\theta^\star\) (\(t=0\)) to the global estimate \(\theta^\star_{\rm g}\) (\(t=1\)). 
We see that in this particular direction, the optimum is quite sharp, which makes it 
difficult to find precisely using gridding. 
\begin{figure}[!htb]
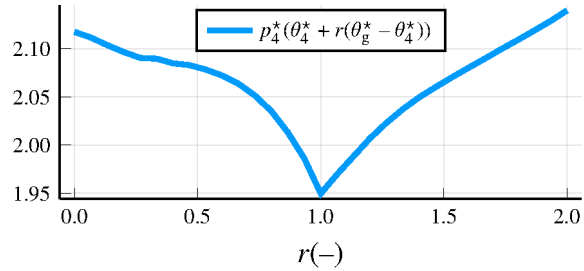

	\centering
	\begin{flushright}
		\inputnamedtex{valuefun_double}
	\end{flushright}
\caption{\new{Double integrator: Double integrator: degree 4 polynomials and 4 splits.} Slice of the value function  between the 
attained optimum (\(r=0\)) and the estimated global optimum (\(r=1\)).}
\label{fig:valuefun_double}
\end{figure}

\subsubsection*{Brockett integrator}
The Brockett integrator is defined according to 
\cite{Brockett83asymptoticstability} as
\begin{equation}
\begin{alignedat}{2}
\dot{x}_1 &= u_1\\
\dot{x}_2 &= u_2\\
\dot{x}_3 &= u_1x_2 - u_2x_1,
\end{alignedat}
\end{equation}
where \(X = \{x \in \mathbb{R}^3 : ||x||_\infty \leq 1\}\), \(X_T = \{0\}\), \mbox{\(U = \{u \in \mathbb{R}^2 : ||u||_2 \leq 1\}\)}, and \(T = 1\).
This system usually serves as a benchmark 
for nonholonomic control strategies, because it is one of the simplest systems for which 
there exists no continuous control law which would make the origin asymptotically
stable \cite{Brockett83asymptoticstability}.

Figure \ref{fig:descent_Bro_d4np_6} shows the optimization 
results for degree 4 approximation. The estimate of the global optimum was attained by sampling the parameter space on a grid with cell size \(0.1\). Furthermore, we assumed that the splits are symmetrical along all three axes, making the search space 3-dimensional. The computation time of the sampling was 16 hours over 1000 unique split positions. Without the symmetry assumption, the computation would be intractable as the full search space has 6 dimensions. The optimizer attained by our method was
  \begin{equation*}
	\theta^{\star} =[0.011 , -0.011 , 0.011 , -0.011 , 0.004 , -0.004]^{\top},
  \end{equation*}
while the global estimate was
\begin{equation*}
	{\theta}^{\star}_{\rm g} = [0,0,0,0,0,0]^{\top}.
\end{equation*} 
Both minimizers are quite close in this case. The found optimum improves the initial guess by \(62\%\) with respect to the global optimum estimate, and it was found in 30 minutes whereas the global estimate with symmetry assumption took 16 hours. A brute-force search in the whole parameter space would take roughly 12 years on the same computational setup.

\begin{figure}[!htb]
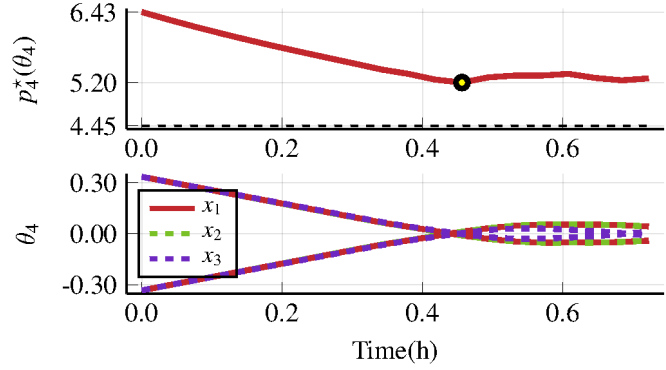

	\centering
	\begin{flushright}
		\inputnamedtex{descent_Bro_d4np_6}
	\end{flushright}
\caption{
\new{ Brockett integrator: degree 4 polynomials and 6 splits.}  The volume of the ROA approximation is shown in the top plot. The bottom plot shows how the split positions evolved during the optimization process. There were 3 splits for each state variable.
The black-and-yellow dot represents the attained minimum. We see that all the splits were close to the origin at the minimum.}
\label{fig:descent_Bro_d4np_6}
\end{figure}

The Figure \ref{fig:valuefun_bro} shows a 1-dimensional line segment parametrized by \(r\in[0,2]\), connecting the attained solution \(\theta^\star\) (\(r=0\)) to the global estimate \(\theta^\star_{\rm g}\) (\(r=1\)). 
Note that the x-axis has a very small scale, concretely \(||\theta^\star - \theta^\star_{\rm g}|| = 0.02\) while the system is bounded between \(\pm 1\). 
\begin{figure}[!htb]
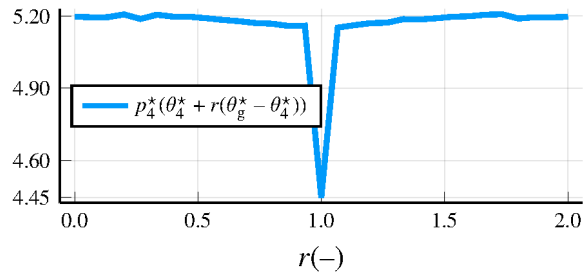

	\centering
	\begin{flushright}
		\inputnamedtex{valuefun_bro}
	\end{flushright}
\caption{\new{Brockett integrator: degree 4 polynomials and 6 splits.} Slice of the value function between the attained optimum (\(r=0\)) and the estimated global optimum (\(r=1\)).
}
\label{fig:valuefun_bro}
\end{figure}

\subsection{High degree}
\label{sec:low_high_order}
This section contains optimization results for the same systems but at a higher degree of the SOS approximation. We do not provide the estimates of global minima, because they would take very long to compute. The high-degree systems had numerical difficulties and their optimization was more demanding than the low-degree case.

We provide two plots for each system, the first one being the application of the same method directly on the high-degree system. The second shall plot objective values of the high-degree system, for the parameter paths obtained from the \emph{low}-degree optimization; the low-degree problems were optimized first and the high-degree system was simply evaluated along their parameter paths. Given the positive results, this technique could be a possible measure to circumvent the inherently bad numerical conditioning of high-degree SOS problems.

\subsubsection*{Double integrator}
Figure \ref{fig:descent_Double_d8np_4} shows the results for the \new{Double integrator with degree 8 polynomials.} We see that the path of the objective values 
is not as smooth as in the low-degree case.

Figure \ref{fig:descent_Double_lowhigh_d8np_4} shows the objective values \new{with degree 8 polynomials while using parameter path obtained with degree 4 and 6 polynomials.}
We see that we can obtain almost the same optimal values much faster (120 times for the degree 4 path and 24 times for the degree 6 path).

\begin{figure}[!htb]
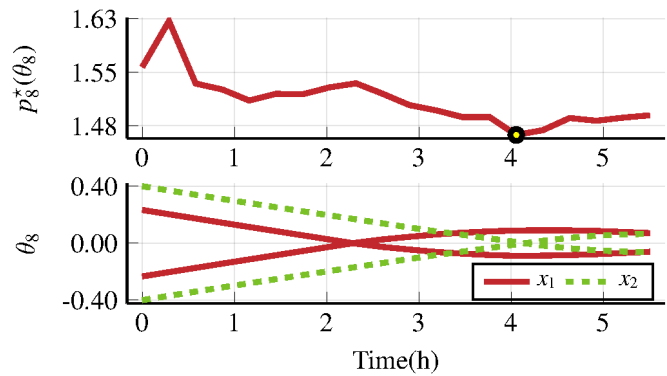

	\centering
	\begin{flushright}
		\inputnamedtex{descent_Double_d8np_4}
	\end{flushright}
\caption{\new{Double integrator: degree 8 and 4 splits}. The volume of the ROA approximation is shown in the top plot. The bottom plot shows how the split positions evolved during the optimization process. There were two splits for each state variable.
The black-and-yellow dot represents the attained minimum.}
\label{fig:descent_Double_d8np_4}
\end{figure}

\begin{figure}[!htb]
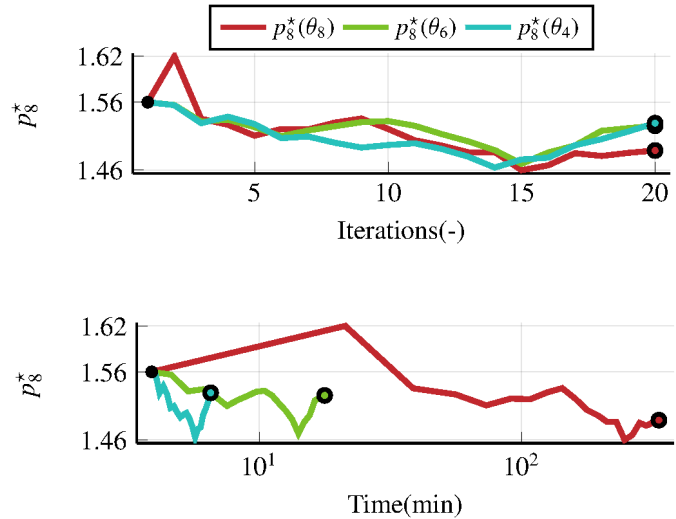

	\centering
	\begin{flushright}
		\subfloat
		{
		  \inputnamedtex{descent_Double_lowhigh_d_864_np_4iter}
		}
	\end{flushright}
	\begin{flushright}
		\subfloat
		{
		  \inputnamedtex{descent_Double_lowhigh_d_864_np_4time}
		}
	\end{flushright}
\caption{
\new{Double integrator: degree 8 polynomials and 4 splits, split parameters computed with lower degree polynomials.}
The computation times per iteration were \(7.91\)s, \(41.6\)s, and \(990.5\)s for degrees 4, 6, and 8 in this order. We can see that all the trajectories reach similar optimal values, while the lower-degree ones were calculated significantly faster. 
}
\label{fig:descent_Double_lowhigh_d8np_4}
\end{figure}





\subsubsection*{Brockett integrator}
Figure \ref{fig:descent_Bro_d6np_6} shows the results for the Brockett integrator \new{with degree 6 polynomials}. Again, we see that the objective path is not 
as smooth as in the low-degree case.

Figure \ref{fig:descent_Bro_lowhigh_d6np_6} shows the results from using the split parameter path obtained with \new{degree 4 polynomials}. We see that in this case, the found minimum was improved by approximately \(55\%\) compared to the initial guess.

\begin{figure}[!htb]
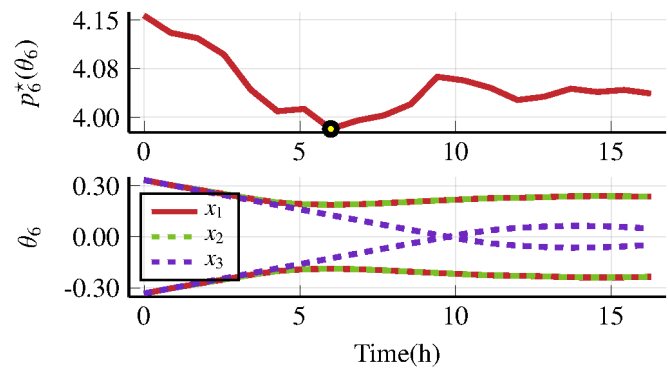

	\centering
	\begin{flushright}
		\inputnamedtex{descent_Bro_d6np_6}
	\end{flushright}
\caption{ \new{Brockett integrator: degree 6 polynomials and 6 splits}.
The volume of the ROA approximation is shown in the top plot. The bottom plot shows how the split positions evolved during the optimization process. There were 3 splits for each state variable. The black-and-yellow dot represents the attained minimum.}
\label{fig:descent_Bro_d6np_6}
\end{figure}

\begin{figure}[!htb]
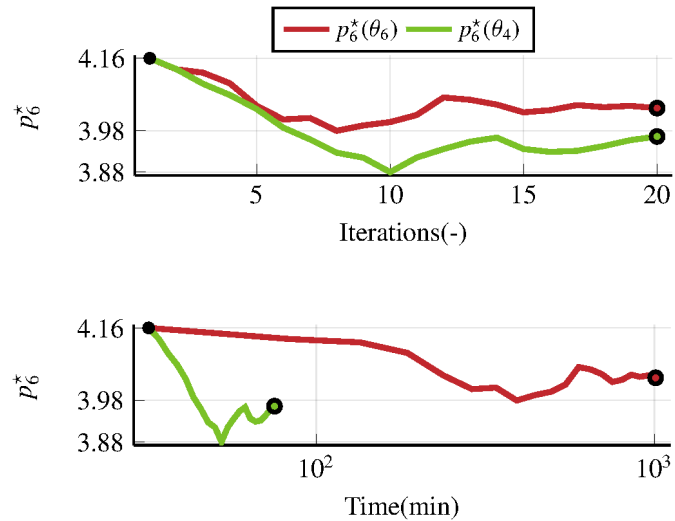

	\centering
	\begin{flushright}
		\subfloat
		{
		  \inputnamedtex{descent_Bro_lowhigh_d_864_np_4iter}
		}
	\end{flushright}
	\begin{flushright}
		\subfloat
		{
		  \inputnamedtex{descent_Bro_lowhigh_d_864_np_4time}
		}
	\end{flushright}
\caption{
	\new{ Brockett integrator with degree 6 polynomials and 6 splits with split-parameter paths obtained from degree 4 (green) and degree 6 (red) approximations. 	We see that the ROA approximations for parameters optimized with degree 4 polynomials are better than those obtained by optimizing the parameters with degree 6 polynomials. This may be caused by numerical instability present in high-order sum-of-squares approximations. }
}
\label{fig:descent_Bro_lowhigh_d6np_6}
\end{figure}

\addtolength{\textheight}{0cm}   

\section{Conclusion}
\label{sec:conclusion}
We have presented a method for optimizing 
the split locations for ROA computation via conic differentiation \new{of the underlying SDP} problem and first-order methods. We have adopted a differentiation method for multi-parametric SDPs and improved its scaling by a considerable margin as can be seen in Figures \ref{fig:comp_qr_lsqr_fin} and \ref{fig:comp_qr_lsqr_fin_fixed_param}.
In Section \ref{sec:numerical_examples}, we demonstrated the viability of the method by
optimizing the split ROA problem with an off-the-shelf first-order method without any problem-specific tuning. We have managed to improve the objective values by \(60\%\) improvement across the presented examples, where \(100\%\) would mean attaining the (estimated) global optimum.

Finally, we have discussed the possibility of saving time and avoiding numerical issues of high-degree approximations by using optimal solutions from low-degree approximations as starting points.

\new{Possible generalizations of the presented method include using non-axis aligned splits or, more generally, a suitably parametrized semialgebraic partition of the constraint set. Another avenue for future research is merging the proposed approach with sparsity exploiting methods of ~\cite{wang2023exploiting,tacchi2020approximating,schlosser2020sparse}, which are complementary means of reducing the computational complexity of the moment-sum-of-squares approaches for dynamical systems.}

\bmsection*{Acknowledgment}
The authors would like to thank Antonio Bellon for fruitful discussions
about the generic properties of SDPs.
\label{sec:future_work}




\bibliography{library_roa_diff_wiley}

\end{document}